\def\pd#1#2{\frac{\partial#1}{\partial#2}}
\def\pois#1#2{\{#1,#2\}}                           
\def\set#1{\{\,#1\,\}}                 
\def\>#1{{\bf #1}}                      
\def\matrix#1#2{\left(\begin{array}{#1} #2 \end{array}\right)}
\def\eq#1{{\begin{equation} #1 \end{equation}}}
\def\poi{\{\:,\}}
\def\mitad{\frac{1}{2}}
\def\a{\alpha}
\def\w{\omega}                            
\def\R{{{{\mathbb{R}}}}}                 
\def\N{{{{\mathbb{N}}}}}           
\def\dim{\hbox{{\rm dim}}}                   
\def\rank{\hbox{{\rm rank}}}
\def\feed{\mathrm{Feed}}
\def\feedtot{\mathrm{Feedtot}}
\def\nofeed{\mathrm{Nofeed}}
\def\rfeed{\mathrm{Rfeed}}
\def\pois{\mathrm{POI}}
\def\com #1{\small\% \emph{{#1}}}
\newtheorem{theorem}{Theorem}
\newcommand{\nc}{\newcommand}                        
\nc{\rnc}{\renewcommand}
\rnc{\a}{\alpha} \rnc{\b}{\beta} \rnc{\d}{\delta} \nc{\D}{\Delta} \nc{\e}{\varepsilon}
\nc{\g}{\gamma} \nc{\G}{\Gamma} \rnc{\l}{\lambda} \rnc{\L}{\Lambda}  \nc{\n}{\nabla}
\nc{\var}{\varphi} \nc{\ro}{\rho} \nc{\s}{\sigma} \nc{\Sig}{\Sigma} \rnc{\t}{\tau}
\nc{\te}{\theta} \rnc{\o}{\omega} \rnc{\O}{\Omega} \nc{\z}{\zeta}
\rnc{\P}{\Phi} \nc{\Te}{\Theta}
\nc{\p}{\partial}
\def\eqy#1{{\begin{eqnarray} #1 \end{eqnarray}}}
\def\eqi#1{{\begin{eqnarray*} #1 \end{eqnarray*}}}
\def\matrix#1#2{\left[\begin{array}{#1} #2 \end{array}\right]}
\begin{document}
\date{\bf \today}

\title[A Hamiltonian Algorithm for Singular LQ  Systems]{A Hamiltonian  Algorithm for Singular Optimal LQ  Control Systems} 

\author{Marina Delgado--T\'ellez}\address{Depto. de Matem\'atica Aplicada Arquitectura T\'ecnica,  Univ. Polit\'ecnica de Madrid.
Avda. Juan de Herrera 6, 28040 Madrid, Spain}
\email{marina.delgado@upm.es}

\author{Alberto Ibort}\address{Alberto Ibort: Departamento de Matem\'aticas\\Universidad Carlos III de Madrid\\
Avda. de la Universidad 30, Legan\'{e}s 28911\\Madrid\\Spain}\email{albertoi@math.uc3m.es}

\thanks{Research partially supported by the Spanish MICINN grant MTM 2010-21186-C02-02.}

\subjclass[2000]{49J15, 34A09, 34K35, 65F10} \keywords{LQ optimal control, singular optimal control, implicit
differential equations, Hamiltonian systems, presymplectic systems, Pontryaguine's Maximum Principle, constraints algorithm, numerical algorithms.}

\begin{abstract}
A Hamiltonian algorithm, both theoretical and numerical, to obtain the reduced equations implementing Pontryagine's Maximum Principle for singular linear--quadratic optimal control problems is presented.   This algorithm is inspired on the well--known Rabier--Rheinhboldt constraints algorithm used to solve differential--algebraic equations.  Its geometrical content is exploited fully by implementing a Hamiltonian extension of it which is closer to Gotay--Nester presymplectic constraint algorithm used to solve singular Hamiltonian systems.   
Thus, given an optimal control problem whose optimal feedback is given in implicit form, a consistent set of equations is obtained describing the first order differential conditions of Pontryaguine's Maximum Principle.  Such equations are shown to be Hamiltonian and the set of first class constraints corresponding to controls that are not determined, are obtained explicitly.  

The strength of the algorithm is shown by exhibiting a numerical implementation with partial feedback on the controls that provides a partial optimal feedback law for the problem.
The numerical algorithm is inspired on a previous
analysis by the authors of the consistency conditions for singular linear--quadratic optimal
control problems.    The numerical algorithm is explicitly Hamiltonian.  It is shown that only two possibilities can arise for the reduced Hamiltonian PMP: that the
reduced equations are completely determined or that they depend on a family of free parameters  that will be called the ``gauge'' controls of the system determined by the first class constraints. 
The existence of these extra degrees of freedom opens new possibilities for the search of solutions.
Numerical evidence of the stability of the algorithm is presented by discussing various relevant numerical experiments.
\end{abstract}


\maketitle


\tableofcontents



\section{Introduction}

In a recent paper \cite{De09} it was shown that the reduced first order differential conditions of Pontryagine's 
Maximum Principle (PMP) for singular optimal control LQ problems can be obtained by using a stable
linear numerical algorithm inspired in the theory of constraints whose origin lies in the theory of 
singular Lagrangians on analytical mechanics.    However that numerical algorithm was not 
compatible in general with the natural Hamiltonian structure of those systems.     

In this paper we will present an algorithm to obtain the differential conditions implementing PMP for singular optimal control problems (both theoretically and numerically) that is inspired too in the theory of constraints but is explicitly Hamiltonian.     In order to achieve it, a new Hamiltonian structure is introduced 
that provide an explicit Hamiltonian structure for the PMP.  Such Hamiltonian structure is the
linear counterpart of the construction known in symplectic geometry as the coisotropic embbeding
of presymplectic manifolds \cite{Go81}.  This extended Hamiltonian structure allows in addition to analyze the reduced PMP by using the standard classification of the constraints of the problem in first and second class.

We will recall that an optimal control problem is singular if there is no optimal feedback law or, in
other words, if the first order  differential conditions imposed by Pontryagine's Maximum Principle
define an implicit system of  differential equations.   

It is natural to approach such situation by
considering a reduction of the system imposing the consistency of the corresponding set of differential-algebraic equations.  Such procedure
can be carried out in various forms (see for instance \cite{Ku06} and references therein).  A geometrical way of describing the consistency of implicit systems was discussed by Rabier and Rheinboldt
\cite{Ra94} (see also \cite{Me95}).    

These algorithms are broad generalizations of Gotay's and Nester presymplectic constraint algorithm (PCA)
\cite{Go78} that allows to find the solutions of presymplectic Hamiltonian systems and that provides a
geometrical framework to the Dirac--Bergmann  theory of constraints for singular Lagrangian
systems, i.e., Lagrangian systems such that the Legendre transform is not a diffeomorphism \cite{Di50}.
L\'opez and Mart\'{\i}nez \cite{Lo00} used it to discuss problems in subriemannian geometry.
Volckaert already started the use of the PCA algorithm to study singular optimal control problems \cite{Vo99} and Petit
\cite{Pe98} used the Rabier-Rheinboldt algorithm to solve a class optimal control problems with control
equation in implicit form.   Delgado and Ibort proposed an analysis of singularities and the use of
constraint algorithms in the realm of optimal control theory \cite{De03a, De03b} and more recently
Barbero {\em et al} \cite{Ba09} have discussed the PCA algorithm in this setting again.   

The numerical implementation of the constraint algorithm discussed in \cite{De09} produce the largest 
subspace where the implicit differential equations defined by the PMP are consistent and in the setting of LQ optimal control provides an alternative analysis to the index and the structure of the system \cite{Ba06}, \cite{Ku97}.    The constraint
algorithm becomes a linear algebra problem and can be suitably dealt with by an appropriate use of
the SVD algorithm as in \cite{De09}, however such reduction algorithm didn't take advantage of all 
the information provided by the SVD decomposition.  In fact at any step of the algorithm
a number of control variables can be solved explicitly and reinserted in the algorithm.
We will call such mechanism a constraints algorithm with partial feedback on the controls and will be developed in Sections 1 and 2 of this paper.    

Moreover, as it was indicated above, the general numerical solution of DAE's (see for instance \cite{Br89} and references therein) doesn't preserve the Hamiltonian (or symplectic) structure of the problem.    In
Section 3 the presymplectic geometry of the problem is reviewed and a new Hamiltonian regularization of the problem is presented that will be used to develop a Hamiltonian constraints algorithm that extends the constraints algorithm with partial feedback, and thus it is shown to be Hamiltonian.  The Hamiltonian extension of the constraints algorithm allows for a complete description of the structure of the reduced PMP by introducing the notion of first and second class constraints.   It will be shown that the reduced equations are not completely determined unless there are no first class constraints and that the undetermined degrees of freedom are just determined explicitly by them.

Section 4 analyzes the numerical implementation of this Hamiltonian regularization
for singular LQ optimal control problems.
In Section 5 the pseudocodes of the various algorithms are discussed in detail and,
finally, in Section 6 various relevant numerical experiments are presented that explore some of the different
situations that arise in the theory and that provide evidence for the stability of
the algorithm.


\section{A recursive constraints algorithm with partial feedback on the controls for singular LQ systems}\label{sing-lq}


\subsection{The setting}
As it was stated in the introduction, we will study LQ optimal control
systems (even if many of the theoretical results obtained here remain valid in more general settings), more specifically we will discuss the problem of finding $C^1$-piecewise smooth curves
$\gamma(t)=(x(t),u(t))$ satisfying the linear control equations\footnote{Einstein's summation convention on repeated indices will be used in what follows unless stated explicitly.}:
\begin{equation}\label{control}
    \dot{x}^i=A^i_j\,x^j+B^i_a\,u^a,  \quad i = 1, \ldots, n,\quad  a = 1, \ldots, r,
\end{equation}
minimizing the objective functional:
\begin{equation}
    \label{cost} S(\gamma)=\int_{t_0}^TL(x(t),u(t))\,d t,
\end{equation}
where the quadratic Lagrangian $L$ has the form:
\begin{equation}
    L(x,u)=\frac{1}{2}Q_{ij}\,x^i\,x^j+N_{ia}\,x^i\,u^a+\frac{1}{2}R_{ab}\,u^a\,u^b,
\end{equation}
the matrices $Q$ and $R$ being symmetric, and subjected to fixed endpoint conditions: $x(t_0)=x_0$,
$x(T)=x_T$. The coordinates $x^i$, $i=1,\ldots,n$ describe points in state space  $x\in \R^n$, and
$u^a$, $a=1,\ldots,m$, are the control parameters  defined on the linear control space $\R^m$. The
matrices $A$, $B$, $Q$, $N$ and $R$ will be considered to be constant for simplicity even if the
algorithm  we are going to discuss can be applied without difficulty to the time-dependent
situation.

Normal extremals are provided by Pontryagine's Maximum
Principle \cite{Po62}: the curve $(x(t),\,u(t))$ is a normal extremal if there exists a lifting
$(x(t), p(t))$ of $x(t)$ to the costate space $\R^n\times\R^n$ satisfying Hamilton's
equations:
\begin{equation}\label{pmp}
    \dot{x}^i=\frac{\partial H}{\partial p_i}, \quad \quad
    \dot{p}_i=-\frac{\partial H}{\partial x^i},
\end{equation}
where $H$ is  Pontryagine's Hamiltonian function:
\begin{equation}\label{ham}
    H(x,p,u) = p_i\,(A^i_j\,x^j+B^i_a\,u^a)-L(x,u),
\end{equation}
and the set of conditions:
\begin{equation}\label{liga}
    \phi^{(1)}_a (x,p,u):= \frac{\partial H}{\partial u^a}
    =p_i\,B^i_a-N_{ia}\,x^i-R_{ab}\,u^b=0.
\end{equation}


\subsection{Consistency conditions and the recursive constraints algorithm}\label{consistency}

We will call conditions in Eq. (\ref{liga}) primary constraints.  Trajectories   that are
solution to the optimal control problem must lie in the linear submanifold:
\begin{equation}\label{m1}
    M_1=\{(x,p,u)\in M_0|~\phi^{(1)}_a (x,p,u) = 0 \},
\end{equation}where $M_0 = \set{(x,p,u)\in\R^{2n+m}}$ denotes the total  space of the system.

If $\gamma(t) = (x(t),p(t),u(t))$ is a solution of the optimal problem, then its derivative will satisfy Eqs. \eqref{pmp} and
\begin{equation}\label{deru}
\dot{u}^a = C^a(x,p,u),
\end{equation}
with $C^a(x(t),p(t),u(t))$ a function depending on the curve $\gamma(t)$, however because of the constraint \eqref{liga}
we get,
$$\pd{\phi^{(1)}_a}{x^i}\pd{H}{p_i}-\pd{\phi^{(1)}_a} {p_i}\pd{H}{x^i}+
     \pd{\phi^{(1)}_a}{u^b}\,C^b=0.$$
Then, if the matrix
$$R_{ab}=\frac{\partial^2 H} {\partial u^a\partial u^b}$$
is invertible in $M_1$,  then the functions $C^a$ are completely determined and
there exists an optimal feedback condition solving Eq. (\ref{liga}), given explicitly by:
\eq{\label{feedback} u^b=(R^{-1})^{ab}\,(p_iB^i_{\,a}-N_{ia}x^i).}%
and then, we obtain for Eq. (\ref{deru}):
\eqi{\dot{u}^b&=&(R^{-1})^{ab}(\dot{p}_iB^i_a-N_{ia}\dot{x}^i)=\\
&=&(R^{-1})^{ab}\left((-p_jA^j_i+Q_{ij}x^j+N_{id}u^d)B^i_a-N_{ia}(A^i_j\,x^j+B^i_d\,u^d)\right).}
Notice that in this case $\dot{\phi}^{(1)}_a$ vanishes automatically on $M_1$.
Linear-quadratic systems such that the matrix $R$ is invertible will be called in what follows regular and singular otherwise.

For a singular optimal LQ  system it may occur that solutions of (\ref{pmp}) starting at points
$(x_0, p_0,u_0)\in M_1$ will not be contained in
$M_1$ for $t> 0$ for any $u$.   Because Eqs. (\ref{pmp})-(\ref{liga}) must be satisfied along
optimal paths, we must consider initial conditions only those points  $(x_0,p_0,u_0)$ for which
there is at least a solution of (\ref{pmp}) contained in $M_1$ starting from them, i.e.,  such that there
exists $C$ for which,
\begin{equation}\label{recurr}
 \phi^{(2)}_a:=\dot{\phi}^{(1)} _a =0,
\end{equation}%
where the derivative is taken in the direction of Eq. \eqref{pmp} and $\dot{u}=C$. The subset
obtained, that will be denoted by $M_2$, is again a linear submanifold of $M_1$ (this is also true
in the time-dependent case).   The functions $\phi^{(2)} _{\ \ a}$ defining  $M_2$ in $M_1$ will be called 
secondary constraints, also known as ``hidden
constraints'' in the context of DAEs. 

Clearly, the argument goes on and we will obtain in this form a family of linear
submanifolds defined recursively as follows:%
\begin{equation}\label{recursive} 
M_{k+1}=\{(x,p,u)\in M_k \mid \, \exists\ C~{\mbox{such that}}~Ê
\phi^{(k+1)} _a := \dot{\phi}^{(k)} _a\,(x,p,u) =0\} , \quad k \geq 1.
\end{equation}
where the derivative $\dot{\phi}^{(k)} _a \,(x,p,u)$ it taken in the direction of eq. \eqref{pmp} and $\dot{u}^a=C^a$ on $M_k$, i.e., satisfying all previous constraints $\phi^{(l)}$, $l < k$. 
Eventually, the recursion will stop and $M_\kappa = M_{\kappa+1} = M_{\kappa+2} = \dots$, for certain finite $\kappa$.  We will call the number $\kappa$ of steps it takes the algorithm to stabilize, the (recursive) index of the problem.  The final space $M_\kappa := M_\kappa$ will be called the final constraint submanifold of the problem, that is the set of consistent initial condition for the DAE (\ref{pmp})-(\ref{liga}).

As it was stressed in the introduction, this algorithm constitutes an adapted version to the present setting of the reduction algorithm for DAEs \cite{Ra94,Ra02} (see also \cite{Ku06} and \cite{Br89}).
However, the DAE system above has an additional geometrical structure.   We will devote the following sections to incorporate this additional structure in the algorithm, but before doing it we will discuss a version of this algorithm that incorporates a partial feedback of the controls.  The idea behind it is that depending on the rank of the matrix of coefficients of the controls in the constraints, part of them could be solved providing a partial optimal feedback law.  This idea will be discussed in the following section.


\subsection{Constraints algorithm with partial feedback on the controls}\label{pcasect}

We will use matrix notation in order to write in a compact form the recursive law for the algorithm.


\subsubsection{The initial data}
We will denote by $M^{(0)} := M_0 = \mathbb{R}^{2n} \times \mathbb{R}^m$ the total space of the system where $x$, $p \in \mathbb{R}^n$ and $u^{(0)}:= u\in \mathbb{R}^{m_0}$ ($m_0 := m$) denote column vectors.

Thus the problem we have is to
obtain solutions of the differential equation:
\eqi{\dot{x} & = & Ax+Bu,}with $A,~Q\in\R^{n\times n}$, $B,~N\in\R^{n\times m}$ and
$R\in\R^{m\times m}$, such that Pontryagine's Hamiltonian is maximized:
\eqi{H(x,p,u)=p^TAx+p^TBu-\mitad x^TQx-x^TNu-\mitad u^TRu.}%
Thus, the differential conditions for the PMP, eq. \eqref{pmp} become:
$$
\dot{x} = \pd{H}{p^T}=Ax+Bu, \quad \dot{p} = -\pd{H}{x^T} = Qx-A^Tp+Nu,\quad \dot{u}  = C.
$$
We may written the previous differential equation as a vector field depending on the parameters $u$
as:
$$
\Gamma^{(0)}(u) = \left[ \begin{array}{c} \dot{x} \\ \dot{p} \end{array} \right] = \left[ \begin{array}{c} Ax+Bu \\ Qx-A^Tp+Nu \end{array} \right] = G^{(0)} \left[ \begin{array}{c} x \\ p \end{array} \right] + Z^{(0)} u^{(0)} ,
$$
with 
$$ 
 G^{(0)} = \left[ \begin{array}{cc} A & 0  \\ Q & - A^T \end{array} \right], \quad Z^{(0)} = \left[ \begin{array}{c} B \\ N \end{array} \right] 
 $$
The primary constraint, eq. \eqref{liga}, written as column vector is given by:
$$
\phi^{(1)} :=  -N^Tx+B^Tp-Ru = S^{(1)} \left[ \begin{array}{c} x \\ p \end{array} \right] - R^{(1)} u^{(0)}
$$
with 
\begin{equation}\label{S1}
S^{(1)} = [-N^T | B^T] = -Z^{(0)\,T} J, \quad R^{(1)} = R ,
\end{equation}
and $J$ denotes the $2n\times 2n$ canonical symplectic matrix:
$$
J = \left[ \begin{array}{c|c} 0 & I_n \\ \hline -I_n & 0 \end{array} \right].
$$
Because of the linear nature of the equations above, all further constraints derived in the algorithm will be linear.   


\subsubsection{The partial feedback}
If $R^{(1)}$ is singular we implement the recursive constraints algorithm as follows.  
First we compute the SVD of $R^{(1)}$
$$
R^{(1)} = U^{(1)}\, \left[\begin{array}{c|c}
\Sigma^{(1)} & 0 \\  \hline 0 &0
\end{array}\right]\,V^{(1)\,T} .
$$
with $\Sigma^{(1)}$ the diagonal matrix of rank $r_1$ whose diagonal elements are the singular values $s_1, \ldots s_{r_1}$ of $R^{(1)}$.
We will redefine the controls $u^{(0)}$ as follows:
$$ 
\tilde{u}^{(0)} := (V^{(1)\, T} u^{(0)}) (1:r_1), \quad u^{(1)} := (V^{(1)\, T} u^{(0)}) (r_1+1:m_0),
$$
in other words:
\begin{equation}\label{u0}
\left[\begin{array}{c}
\tilde{u}^{(0)} \\  \hline u^{(1)}
\end{array}\right] = V^{(1)\, T} u^{(0)} 
\end{equation}



Then multiplying the primary constraint $\phi^{(1)}$ by $U^{(1)\,T}$ on the left:
\begin{eqnarray}\label{feedliga}
0&=&U^{(1)\,T}\,\phi^{(1)} = U^{(1)\,T} S^{(1)} \matrix{c}{x\\\hline p} -
\left[\begin{array}{c|c} \Sigma^{(1)} & 0 \\  \hline 0 &0 \end{array}\right] \left[\begin{array}{c}
\tilde{u}^{(0)} \\  \hline u^{(1)} \end{array} \right] \\
&=& \nonumber U^{(1)\,T} S^{(1)}\matrix{c}{x\\\hline p}+ \left[\begin{array}{c}
\Sigma^{(1)}\tilde{u}^{(0)} \\  0 \end{array} \right] 
\end{eqnarray} %
The previous equation splits in two parts:

\begin{enumerate}
\item The partial feedback: 
\begin{equation} 
\tilde{u}^{(0)} =  (\Sigma^{(1)})^{-1}U^{(1)\,T}(1:r_1,:)S^{(1)}\matrix{c}{x\\\hline
p} =: \feed^{(1)}\matrix{c}{x\\\hline p},
\end{equation}

\item The reduced primary constraint:
\begin{equation}\label{phi2}
\widetilde{\phi}^{(1)} = U^{(1)\,T}(r_1+1:m,:)S^{(1)}\matrix{c}{x\\\hline p} = 0 
\end{equation}%
\end{enumerate}
We may write:
$$
S_f^{(1)} = U^{(1)\,T}(1:r_1,:)S^{(1)}, \quad S_c^{(1)} =U^{(1)\,T}(r_1+1:m,:)S^{(1)} ,
$$
this is,
$$
\left[\begin{array}{c} S_f^{(1)} \\ \hline S_c^{(1)} \end{array} \right] = U^{(1)\,T} S^{(1)}.
$$
With this notation, the matrix $\feed^{(1)}$ determining partial feedback of the controls $\tilde{u}^{(0)}$ reads:
$$
\feed^{(1)} =   (\Sigma^{(1)})^{-1} S_f^{(1)} ,
$$
and the reduced primary constraint $\widetilde{\phi}^{(1)}$:
\begin{equation}\label{rphi1}
\widetilde{\phi}^{(1)} = S_c^{(1)} \matrix{c}{x\\\hline p} = 0 .
\end{equation}

If we denote by $m_1$ the length of the vector $u^{(1)}$, i.e., $m _1 = m_0 - r_1$, then
we may consider that our optimal control problem is defined on the space
$M^{(1)} = \mathbb{R}^{2n} \times \mathbb{R}^{m_1}$ with elements $(x,p,u^{(1)})$.
There the reduced primary constraint $\widetilde{\phi}^{(1)}$ defines a linear submanifold $\widetilde{M}_1 = \{ (x,p,u^{(1)}) \in M^{(1)} \mid \widetilde{\phi}^{(1)} = 0 \}$.    This submanifold $\widetilde{M}_1$ is clearly isomorphic to $M_1$ by using the map:
\begin{equation}\label{alpha1}
\alpha_1 \colon \widetilde{M}_1 \to M_1,\quad  \alpha_1(x,p,u^{(1)}) = (x,p,u^{(0)})
\end{equation}
with:
$$
u^{(0)} = \left[\begin{array}{c} \tilde{u}^{(0)} \\ \hline u^{(1)} \end{array} \right] , \quad \tilde{u}^{(0)} = \feed^{(1)}\left[\begin{array}{c} x \\ \hline p \end{array} \right]
$$


\subsubsection{The first iteration}\label{first_iteration}
The identification between $\widetilde{M}_1$ and $M_1$ allows to write the differential equation $\Gamma^{(0)}(u)$ as a vector field on $\widetilde{M}_1$, in other words, if we introduce the feedback for the controls $\tilde{u}^{0}$ on the equations for $\dot{x}$ and $\dot{p}$ become equations just on the controls $u^{(1)}$. We will denote the vector field thus defined on $\widetilde{M}_1$ as $\Gamma^{(1)}(u^{(1)})$ (or simply $\Gamma^{(1)}$ if it is not necessary to make explicit the dependence on the control parameters $u^{(1)}$), then:
$$
\Gamma^{(1)} = G^{(0)} \left[ \begin{array}{c} x \\ p \end{array} \right] + Z^{(0)} V^{(1)} \left[\begin{array}{c} \tilde{u}^{(0)} \\ \hline u^{(1)} \end{array} \right] .
$$
Writting 
$$
\tilde{Z}^{(0)} := (Z^{(0)} V^{(1)})(1:r_1,:) ,\quad   Z^{(1)} := (Z^{(0)} V^{(1)})(r_1+1:m_0,:) ,
$$
this is:
$$
\left[\begin{array}{c} \tilde{Z}^{(0)} \\ \hline Z^{(1)} \end{array} \right]  = Z^{(0)} V^{(1)}
$$
we will get finally:
$$
\Gamma^{(1)} = G^{(0)} \left[ \begin{array}{c} x \\ p \end{array} \right] + \tilde{Z}^{(0)} \tilde{u}^{(0)} +  Z^{(1)} u^{(1)} = G^{(1)}  \left[ \begin{array}{c} x \\ p \end{array} \right]  +Z^{(1)} u^{(1)} ,
$$
with 
$$
G^{(1)} = G^{(0)} + \tilde{Z}^{(0)} \feed^{(1)} .
$$


Recall that the consistency of the differential equations given by $\Gamma^{(0)}(u)$ on $M_1$ or equivalently, by $\Gamma^{(1)}$ on $\widetilde{M}_1$, determines the secondary constraints, i.e., the submanifold $M_2$, etc.   
Thus imposing consistency to the reduced primary constraint $\tilde{\phi}^{(1)}$, eqs. \eqref{phi2} or \eqref{rphi1}, we get the secondary constraint:
\begin{equation}\label{secondary_feed}
0 = \phi^{(2)} = \dot{\widetilde{\phi}}^{(1)} = S_c^{(1)}\matrix{c}{\dot{x}\\\hline \dot{p}} 
= S_c^{(1)}\bigg( G^{(1)}\matrix{c}{x\\\hline p} + Z^{(1)} u^{(1)}\bigg) = S^{(2)}\matrix{c}{x\\\hline p}  - R^{(2)} u^{(1)}.
\end{equation}
with the new coefficients:
\begin{equation} 
 S^{(2)} = S_c^{(1)}G^{(1)},\quad R^{(2)} = -S_c^{(1)} Z^{(1)} .
 \end{equation}
At this point, we will iterate the procedure, this is, if $R^{(2)}$ is regular, we will solve the controls $u^{(1)}$ and the algorithm stops.   If $R^{(2)}$ is singular we will compute its SVD again:
$$
R^{(2)} = U^{(2)}\, \left[\begin{array}{c|c} \Sigma^{(2)} & 0 \\  \hline 0 &0 \end{array}\right]\,V^{(2)\,T} .
$$
with $\Sigma^{(2)}$ or rank $r_2$, and we decompose the controls $u^{(1)}$ as:
$$
\left[\begin{array}{c}
\tilde{u}^{(1)} \\  \hline u^{(2)}
\end{array}\right] := V^{(2)\, T} u^{(1)} 
$$
where $\tilde{u}^{(1)}$ has length $r_2$ and $u^{(2)}$ has lenght $m_2 = m_1 - r_2 = m_0 - r_1 - r_2$.
Then we will obtain the decompositions:
$$
U^{(2)\, T} S^{(2)} = \matrix{c}{ S_f^{(2)}\\\hline S_c^{(2)}} , \quad 
Z^{(1)} V^{(2)} = [\tilde{Z}^{(1)} \mid Z^{(2)} ] .
$$
Then we will decompose the modified secondary constraint $U^{(2)\, T} \phi^{(2)}$ into the feedback part and the new reduced constraint $\widetilde{\phi}^{(2)}$, obtaining respectively:
$$ 
\tilde{u}^{(1)} = \feed^{(2)} \matrix{c}{ x \\\hline p}, \quad \feed^{(2)}=  (\Sigma^{(2)})^{-1} S_f^{(2)} ,
$$
and
$$
\widetilde{\phi}^{(2)} = S_c^{(2)}  \matrix{c}{ x \\\hline p} .
$$
Then, the vector field $\Gamma^{(2)}(u^{(2)})$ is given by:
$$
\Gamma^{(2)} = G^{(2)} \matrix{c}{ x \\\hline p} + Z^{(2)} u^{(2)}, \quad G^{(2)} = G^{(1)} + 
\tilde{Z}^{(1)} \feed^{(2)} ,
$$
and the stability condition for the reduced secondary constrains $\widetilde{\phi}^{(2)}$ gives the tertiary constraints:
$$
0 = \phi^{(3)} = \dot{\widetilde{\phi}}^{(2)} = \Gamma^{(2)}(\widetilde{\phi}^{(2)}) = S^{(3)} \matrix{c}{ x \\\hline p} - R^{(3)}u^{(2)}, 
$$
where
$$
S^{(3)}= S_c^{(2)} G^{(2)}, \quad R^{(3)} = - S_c^{(2)} Z^{(2)} ,
$$
and the next iteration follows easily.


\subsubsection{The general iteration}
At the $k$th iteration of the algorithm, the matrices:
$$
G^{(k)}, Z^{(k)}, U^{(k)}, V^{(k)}, S^{(k)}
$$
will be given, as well as the decompositions:
$$
U^{(k)\, T} S^{(k)} = \matrix{c}{ S_f^{(k)}\\\hline S_c^{(k)}} ,
$$ 
and
\begin{equation}\label{Zk+1} 
Z^{(k)} V^{(k)} = [\tilde{Z}^{(k)} \mid Z^{(k+1)} ] .
\end{equation}
The vector field $\Gamma^{(k)}$ will be given by:
\begin{equation}\label{gamma_k}
\Gamma^{(k)} = G^{(k)} \matrix{c}{ x \\\hline p} + Z^{(k)} u^{(k)}, 
\end{equation}
and the reduced $k$--ary constraints will be:
$$
\widetilde{\phi}^{(k)} = S_c^{(k)}  \matrix{c}{ x \\\hline p} .
$$
The reduced constraints $\widetilde{\phi}^{(k)}$ define a linear submanifold $\tilde{j}_k \colon \widetilde{M}_k \to M^{(k)} = \mathbb{R}^{2n}\times \mathbb{R}^{m_k}$ :
\begin{equation}\label{Mtilde_k}
\widetilde{M}_k = \{ (x,p,u^{(k-1)}) \in M^{(k)}= \mathbb{R}^{2n}\times \mathbb{R}^{m_k} \mid \widetilde{\phi}^{(k)} = 0  \} ,
\end{equation}
which is isomorphic to the linear submanifold $M_k$ by means of the map:
\begin{equation}\label{alphak}
\alpha_k \colon \widetilde{M}_k \to M_k,\quad  \alpha_k(x,p,u^{(k)}) = (x,p,u^{(k-1)})
\end{equation}
with:
$$
u^{(k-1)} = \left[\begin{array}{c} \tilde{u}^{(k-1)} \\ \hline u^{(k)} \end{array} \right] , \quad \tilde{u}^{(k-1)} = \feed^{(k)}\left[\begin{array}{c} x \\ \hline p \end{array} \right] .
$$

The consistency condition for the $k$--ary reduced constraints will give the $k+1$--ary constraints:
\begin{equation}\label{consistency_reducedk}
0 = \phi^{(k+1)} = \dot{\widetilde{\phi}}^{(k)} = \Gamma^{(k)}(\widetilde{\phi}^{(k)}) = S^{(k+1)} \matrix{c}{ x \\\hline p} - R^{(k+1)}u^{(k)}, 
\end{equation}
with
\begin{equation}\label{Sk+1}
S^{(k+1)}= S_c^{(k)} G^{(k)}, \quad R^{(k+1)} = - S_c^{(k)} Z^{(k)} .
\end{equation}
After computing the SVD factorization of $R^{(k+1)}$:
\begin{equation}\label{svd+1}
R^{(k+1)} = U^{(k+1)}\, \left[\begin{array}{c|c} \Sigma^{(k+1)} & 0 \\  \hline 0 &0 \end{array}\right]\,V^{(k+1)\,T} .
\end{equation}
with $\Sigma^{(k+1)}$ or rank $r_{k+1}$, we decompose the controls $u^{(k)}$ as:
$$
\left[\begin{array}{c}
\tilde{u}^{(k)} \\  \hline u^{(k+1)}
\end{array}\right] := V^{(k+1)\, T} u^{(k)} 
$$
where $\tilde{u}^{(k)}$ has length $r_{k+1}$ and $u^{(k+1)}$ has lenght $m_{k+1} = m_k - r_{k+1} = m_0 - (r_1 + \cdots + r_{k+1})$.
There again we will decompose the matrix $S^{(k+1)}$ according with:
$$
U^{(k+1)\, T} S^{(k+1)} = \matrix{c}{ S_f^{(k+1)}\\\hline S_c^{(k+1)}}.
$$
Thus the partial feedback for the controls $\widetilde{u}^{k}$ will be:
\begin{equation}\label{feed_controlsk}
\tilde{u}^{(k)} = \feed^{(k+1)} \matrix{c}{ x \\\hline p}, \quad \feed^{(k+1)}=  (\Sigma^{(k+1)})^{-1} S_f^{(k+1)} ,
\end{equation}
the reduced $(k+1)$--ary constraints will be given by:
\begin{equation}\label{k+1reduced}
\widetilde{\phi}^{(k+1)} = S_c^{(k+1)}  \matrix{c}{ x \\\hline p} ,
\end{equation}
and the new vector field:
$$
\Gamma^{(k+1)} = G^{(k+1)} \matrix{c}{ x \\\hline p} + Z^{(k+1)} u^{(k+1)}, 
$$
where
\begin{equation}\label{Gk+1}
G^{(k+1)} = G^{(k)} + \tilde{Z}^{(k)} \feed^{(k+1)} .
\end{equation}
Now we are ready to proceed again as we have obtained the matrices (eqs. \eqref{Gk+1}, \eqref{Zk+1}, \eqref{svd+1}, \eqref{Sk+1}):
$$ 
G^{(k+1)}, Z^{(k+1)}, U^{(k+1)}, V^{(k+1)}, \Sigma^{k+1}, S^{(k+1)} ,
$$
that together with $G^{(1)}, Z^{(1)}, U^{(1)}, V^{(1)}, \Sigma^{1}, S^{(1)}$ obtained in Section \ref{first_iteration} will complete the recursive constraints algorithm with partial feedback on the controls.

\subsubsection{The total feedback}
At the end, if $\kappa$ is the index of the problem, the total feedback is the collection of all partial feedbacks $\widetilde{u}^{(0)},\ldots, \widetilde{u}^{(\kappa)}$ ($u= u^{(0)}, m = m_0$):
\begin{eqnarray*}
\tilde{u}^{(0)}(1:r_1)&=&V^{(1)\,T}(1:r_1,:)u = \feed^{(1)}\matrix{c}{x\\\hline p}\\
u^{(1)}&:=& V^{(1)\,T}(r_1+1:m,:)u\\
\tilde{u}^{(1)}(1:r_2)&=&V^{(2)\,T}(1:r_2,:) u^{(1)}=\\&=&V^{(2)\,T}(1:r_2,:)V^{(1)\,T}(r_1+1:m,:)u=\feed^{(1)}\matrix{c}{x\\\hline p}\\
u^{(2)}&:=& V^{(2)\,T}(r_2+1:m_1,:)u^{(1)}=\\
&=&V^{(2)\,T}(r_2+1:m_1,:)V^{(1)\,T}(r_1+1:m,:)u\\
&\dots&\\
\tilde{u}^{(\kappa)}(1:r_K)&=&V^{(\kappa+1)\,T}(1:r_{\kappa+1},:) u^{(\kappa)}=\\
&=&V^{(\kappa+1)\,T}(1:r_{\kappa+1},:)V^{(\kappa)\,T}(r_\kappa+1:m_\kappa,:)\cdots\\
&\cdots& V^{(2)\,T}(r_2+1:m_2,:)V^{(1)\,T}(r_1+1:m_1,:)u=\feed^{(\kappa)}\matrix{c}{x\\\hline p}\\
u^{(\kappa+1)}&:=& V^{(\kappa+1)\,T}(r_{\kappa+1}+1:m_K,:) u^{(\kappa)}=\\
&=&V^{(\kappa+1)\,T}(r_{\kappa+1}+1:m_\kappa,:)\cdots V^{(2)\,T}(r_2+1:m_1,:)V^{(1)\,T}(r_1+1:m,:)u
\end{eqnarray*}%
The controls that remain free at the end, i.e., the vector $u^{(K+1)}$, which are in a number $m - (r_1 + \cdots + r_{\kappa+1})$, can be written as $u^{(\kappa+1)}=\nofeed\cdot u$, where the matrix $\nofeed$ is:
$$
\nofeed:=V^{(\kappa+1)\,T}(r_{\kappa+1}+1:m_\kappa,:)\cdots V^{(2)\,T}(r_2+1:m_1,:)V^{(1)\,T}(r_1+1:m,:) .
$$ 
and the part of the controls that have been solved by using the successive partial feedbacks can be written as
$$
\widetilde{u} = V \cdot u=\feedtot\matrix{c}{x\\\hline p},
$$ 
with
$$
V = \left[ \begin{array}{c} 
V^{(1)\,T}(1:r_1,:) \\
V^{(2)\,T}(1:r_2,:)V^{(1)\,T}(r_1+1:m,:)\\
\vdots\\
V^{(\kappa+1)\,T}(1:r_{\kappa+1},:)\cdots V^{(1)\,T}(r_1+1:m,:) 
\end{array} \right]
$$
and
$$
\feedtot = \matrix{c}{\feed^{(1)}\\
\vdots\\
\feed^{(\kappa+1)}}.
$$



\section{The presymplectic structure of the PMP and the constraints algorithm}


\subsection{Presymplectic constraints algorithm for singular optimal control problems}\label{recursive_presymplectic}
As it was mentioned previously, the differential conditions of the PMP have the geometrical structure of a presymplectic system.   We will discuss it briefly in the particular instance of LQ systems (see for instance \cite{Ba09} for more details).     

If we denote by
$P$ the state space of our system (in the case of LQ systems $P = \mathbb{R}^n$) with local coordinates $x^i$ as above and by $C$ the space of controls  (in the case of LQ systems $C = \mathbb{R}^m$) with coordinates $u^a$, we can form the space $M_0 = T^*P \times C$ where $T^*P$ denotes the cotangent bundle of $P$ (for LQ systems $T^*P = \mathbb{R}^{2n}$) with local coordinates $(x^i, p_i)$ where $p_i$ are called costate variables.   In the space $M_0$ there is a canonical 2--form
$\Omega_0 = dx^i \wedge dp_i$ which is obtained by pulling--back along the projection map
$\pi \colon M_0 \to T^* P$, $\pi (x,p,u) = (x,p)$, the canonical symplectic form $\omega = dx^i \wedge dp_i$ defined on $T^*P$.    Notice that the 2--form $\Omega$ is obviously closed but no non--degenerate because it has a kernel $K := \ker \Omega_0$ which is spanned by the vectors $\partial /\partial u^a$.    The triple $(M_0,\Omega_0 , H_0)$ defines a presymplectic Hamiltonian system with $H_0(x,p,u)$ Pontryagine's Hamiltonian.   The dynamics is provided by any vector field $\Gamma$ such that
\begin{equation}\label{presymplectic}
i_\Gamma \Omega_0 = dH_0 .
\end{equation}
Notice that in the case of LQ systems the space $M_0$ is $\mathbb{R}^{2n+m}$, the
presymplectic 2--form $\Omega_0$ is constant.  Again in the particular instance of LQ systems, if we consider the vector field 
$$
\Gamma = \dot{x}^i \frac{\partial}{\partial x^i} + \dot{p}_i \frac{\partial}{\partial p_i} + \dot{u}^a \frac{\partial}{\partial u^a} ,  
$$ 
as a (column) vector in $\mathbb{R}^{2n+m}$ and 
$$
dH_0 = \frac{\partial H_0}{\partial x^i} dx^i + \frac{\partial H_0}{\partial p_i} dp_i + \frac{\partial H_0}{\partial u^a} du^a
$$ 
as a vector on $\mathbb{R}^{2n+m}$ using the identification between vectors and covectors provided by the given choice of a basis, the dynamical equation, eq. (\ref{presymplectic}) take the matrix form:
\begin{equation}\label{presymplectic_local}
 \left[  \begin{array}{c|c|c} 0 & -I_n & 0 \\ \hline I_n & 0 & 0  \\  \hline 0 & 0 & 0  \end{array} \right]  \left[  \begin{array}{c} \dot{x} \\ \hline \dot{p} \\ \hline \dot{u} \end{array} \right]  =  \left[  \begin{array}{c} \partial H_0 /\partial x^T \\ \hline \partial H_0 / \partial p^T \\ \hline \partial H_0 / \partial u^T \end{array} \right] ,
 \end{equation}
which is equivalent to eqs. \eqref{pmp}-\eqref{liga} (in fact the previous equation is the
local form in canonical  coordinates of any presymplectic system). 

These equations, eqs. \eqref{presymplectic} or \eqref{presymplectic_local}, have the form
of a quasilinear system on $M_0$
\begin{equation}\label{quasilinear}
A(\zeta ) \dot{\zeta} = F(\zeta )
\end{equation}
where $\zeta$ denotes the set of coordinates $(x^i, p_i, u^a)$, $A(\zeta)$ is the (constant) matrix
$[\Omega]$ and $F(\zeta )$ is the vector $\nabla H_0$.  In general a vector $\dot{\zeta}$
satisfying eq. \eqref{quasilinear} will not exists for all $\zeta$'s.   A necessary and sufficient condition for the existence of $\dot{\zeta}$ at a given point $\zeta$ is that $\langle Z, F(\zeta) \rangle = 0$, for all $Z \in\ker A(\zeta)$.  In the case of the presymplectic system eq. \eqref{presymplectic}, this condition means that at a given point $\zeta \in M_0$ the vector $\Gamma$ will exists iff $dH_0 (Z) = 0$ for all $Z \in K$.   The points at which such vector $\Gamma$ exists determines a subset $M_1$ of $M_0$.   A family of independent functions defining $M_1$ are called primary constraints $\phi^{(1)}_a$ for the presymplectic system.  
Notice that because $K$ is spanned by the vectors $Z_a= \partial/\partial u^a$, then a family of independent primary constraints are given by:
\begin{equation}\label{primary}
\phi_a^{(1)} (x,p,u) = dH_0(Z_a) = \partial H_0/\partial u^a = 0 .
\end{equation}
For LQ systems the subspace $M_1$ was discussed in Section \ref{consistency} and the primary constraints $\phi_a^{(1)}$ take the explicit expressions given in eq. \eqref{liga}.

On $M_1$ the presymplectic form $\Omega_0$ induces by restriction a new presymplectic form $\Omega_1 = \Omega_0\mid_{M_1}$.  If we denote by $i_1\colon M_1 \to M_0$ the canonical inclusion map we can also write $\Omega_1 = i_1^*\Omega_0$.   The restriction of Pontryagine's Hamiltonian to $M_1$ will be written  $H_1 = H_0 \mid_{M_1}$.   Hence we have obtained a new presymplectic system $(M_1,\Omega_1, H_1)$ on $M_1$. The corresponding dynamical equation is now given by:
$$
i_{\Gamma_1} \Omega_1 = dH_1,
$$
and we should restrict again to the subset $M_2$ of $M_1$ where $\Gamma_1$ exists.   We will repeat the argument for $M_2$ and we proceed iteratively afterwards.  

At each step $k$ of the algorithm, the presymplectic form $\Omega_0$ restricts to the submanifold $M_k$ by inducing a new presymplectic structure $\Omega_k = \Omega_0\mid_{M_k}$ on it as well as Pontryagine's Hamiltonian $H_k = H_0\mid_{M_k}$.  The dynamical equation (\ref{presymplectic}) becomes:
\begin{equation}\label{presymplectic_k}
i_\Gamma \Omega_k = dH_k,
\end{equation}
Thus the system \eqref{presymplectic_k} will be consistent, i.e., there will exist a vector $\Gamma$ satisfying eq. \eqref{presymplectic_k}, if the constrainsts defined by this system are fullfilled, this is $dH_k(Z) = 0$ for all $Z \in \ker\Omega_k$.   Let us denote by $K_k = \ker \Omega_k$.  Hence the $k$--ary constrainsts $\phi^{(k)}$ will be:
\begin{equation}\label{consistencyk}
\phi^{(k+1)} (x,p,u) = dH_k (Z) = 0, \quad \forall Z \in K_k .
\end{equation}
Thus at each step the algorithm has the structure of a presymplectic system which is consistent on the subspace:
\begin{equation}\label{recursive_sym}
M_{k+1} = \{ (x,p,u)\in M_k \mid \, dH_k (Z) = 0, \quad \forall Z \in K_k \} , \quad k \geq 1.
\end{equation}
This constitutes the recursive presymplectic constraints algorithm (or presymplectic algorithm for short) given in \cite{Go78}-\cite{Go79}.  We will denote by $i_{k+1} \colon M_{k+1} \to M_k$ the natural inclusion.  Notice that $\Omega_{k+1} = i_{k+1}^*\Omega_k$, etc.

The presymplectic algorithm above, eq. \eqref{recursive_sym} is equivalent to the recursive constraints algorithm eq. \eqref{recursive}, however it is not immediately obvious that they are equivalent to the constraints algorithm with partial feedback discussed in Section \ref{pcasect}.  The following theorem shows that they are all indeed the same.

\begin{theorem}\label{equivalence_constraints}  The presymplectic and the recursive constraints algorithms are equivalent.  
Moreover, for singular LQ systems, the constraints algorithm with partial feedback on the controls is equivalent to the presymplectic algorithm.
\end{theorem}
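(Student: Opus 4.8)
The plan is to prove the theorem in two stages, matching the two assertions. First I would establish the equivalence of the recursive constraints algorithm of eq.~\eqref{recursive} with the presymplectic algorithm of eq.~\eqref{recursive_sym}. The key observation is that, at the $k$-th step, the kernel $K_k = \ker \Omega_k$ of the restricted presymplectic form consists precisely of those vectors tangent to $M_k$ that lie in $\ker\Omega_0$, i.e.\ combinations of $\partial/\partial u^a$ that are tangent to $M_k$; equivalently, writing a dynamical vector field $\Gamma_k$ on $M_k$ as a solution of $i_{\Gamma_k}\Omega_k = dH_k$, the choice of the $\dot u$-components is exactly the freedom $C$ appearing in eq.~\eqref{recurr}--\eqref{recursive}. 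One then checks inductively that $dH_k(Z) = \dot\phi^{(k)}$ up to a change of basis in the constraint functions: a vector $Z \in K_k$ pairs with $dH_k$ to give the time derivative of the constraints that cut out $M_k$ inside $M_{k-1}$ along the presymplectic flow, because $\Omega_k$ is the restriction of the canonical form and $H_k$ the restriction of $H_0$. Hence $M_{k+1}$ computed by either recursion coincides as a subset of $M_0$, and by induction the two algorithms terminate at the same final constraint submanifold $M_\kappa$. (This part is essentially the translation of the classical Gotay--Nester result to the LQ setting and should be short.)

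Second, and this is the substantive part, I would show that the constraints algorithm with partial feedback of Section~\ref{pcasect} reproduces the same sequence of submanifolds, via the isomorphisms $\alpha_k \colon \widetilde M_k \to M_k$ of eq.~\eqref{alphak}. The strategy is an induction on $k$ whose inductive hypothesis packages three things: (i) $\alpha_k$ is a linear isomorphism intertwining the reduced dynamics $\Gamma^{(k)}$ on $\widetilde M_k$ with the dynamics $\Gamma^{(0)}(u)|_{M_k}$ on $M_k$ obtained by enforcing all previous constraints; (ii) the reduced constraint $\widetilde\phi^{(k)} = S_c^{(k)}\,[x;p]$ of eq.~\eqref{k+1reduced}, pulled back along the partial feedback, has the same zero set in $M_k$ as $\phi^{(k)}$; and (iii) the control components already solved coincide with the values forced by requiring a solution of the presymplectic dynamics to remain in $M_k$. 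The base case $k=1$ is eq.~\eqref{S1}, eq.~\eqref{feedliga}--\eqref{rphi1}: the SVD split of $R^{(1)} = R$ separates the controls $\tilde u^{(0)}$ that are determined by $\phi^{(1)}=0$ from the genuinely free ones $u^{(1)}$, and $\alpha_1$ of eq.~\eqref{alpha1} is manifestly an isomorphism onto $M_1$. For the inductive step one substitutes the partial feedback $\tilde u^{(k)} = \feed^{(k+1)}[x;p]$ of eq.~\eqref{feed_controlsk} into $\Gamma^{(k)}$ to get $G^{(k+1)} = G^{(k)} + \tilde Z^{(k)}\feed^{(k+1)}$ (eq.~\eqref{Gk+1}), and one verifies that the consistency condition $\dot{\widetilde\phi}^{(k)} = 0$ computed on $\widetilde M_k$, namely eq.~\eqref{consistency_reducedk}--\eqref{Sk+1}, is $\alpha_k$-related to the consistency condition $\dot\phi^{(k)} = 0$ on $M_k$ modulo the previous constraints, i.e.\ to the defining equation of $M_{k+1}$ inside $M_k$.

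The main obstacle I anticipate is bookkeeping rather than conceptual: one must be careful that, at step $k+1$, the constraint $\phi^{(k+1)}$ produced by the plain recursion and the reduced constraint $\widetilde\phi^{(k+1)}$ produced after the SVD cut and feedback substitution define the \emph{same} linear subspace of $M_0$, even though $\widetilde\phi^{(k+1)}$ is expressed purely in $(x,p)$ while $\phi^{(k+1)}$ a priori involves $u$. The resolution is that the rows of $S^{(k+1)}$ (via $U^{(k+1)\,T}$) are split exactly so that the $u^{(k)}$-dependence is absorbed into the feedback rows $S_f^{(k+1)}$, leaving the complementary rows $S_c^{(k+1)}$ as honest constraints on $[x;p]$; one then argues that imposing "$\widetilde\phi^{(k+1)}=0$ together with all partial feedbacks" on $M^{(k+1)}$ pulls back under the composite $\alpha_1\circ\cdots\circ\alpha_{k+1}$ to "$\phi^{(1)}=\cdots=\phi^{(k+1)}=0$" on $M_0$. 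Once this identification of subspaces is in place, dimension count ($m_{k+1} = m_0 - (r_1+\cdots+r_{k+1})$) shows the algorithms stabilize at the same index $\kappa$, and the three-way equivalence follows. A secondary point to check is that the feedback substitution does not alter the dynamical content --- i.e.\ that $\Gamma^{(k+1)}$ is genuinely the restriction of $\Gamma^{(k)}$, not merely one possible extension --- which holds because the components being substituted are precisely those determined (not free) at that stage, so no dynamical information is discarded.
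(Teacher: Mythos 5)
Your proposal is correct and follows essentially the same route as the paper: both parts proceed by induction on the step $k$, the first part identifying the existence of a consistency-preserving vector $\Gamma$ with the condition $dH_k(Z)=0$ for $Z\in K_k$, and the second using the SVD factorization of $R^{(k)}$ to show that solvability of $S^{(k)}[\dot x;\dot p]-R^{(k)}C=0$ is equivalent to the reduced condition $S_c^{(k)}[\dot x;\dot p]=0$, with the maps $\alpha_k$ providing the linear isomorphisms. The bookkeeping concerns you raise are handled in the paper exactly as you anticipate.
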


\begin{proof}
We will proof the equivalence of both algorithms by induction on $k$, the step of the algorithm.  For $k = 1$ is obvious as the expression of the primary constraints $\phi^{(1)}$, hence of $M_1$ is the same for both algorithms.   Let us consider the $k$th step of the algorithm.   Thus we have the presymplectic system $(M_k,\Omega_k, H_k)$ and the consistency condition eq. \eqref{consistencyk}.   If eq. \eqref{consistencyk} is satisfied at the point $(x,p,u)\in M_k$, then there will exists a vector $\Gamma$ in $M_k$ satisfying eq. \eqref{presymplectic_k}, thus because the constraints $\phi^{(k)}$ define $M_k$, then $\Gamma (\phi^{(k)}) = 0$ at $(x,p,u) \in M_k$.  Conversely, if eq. \eqref{recursive} is verified at some point $(x,p,u) \in M_k$, then there exist $C^a$ such that the vector in $M_k$ given by:
$$
\Gamma (x,p,u) = \frac{\partial H_k}{\partial p_i}\frac{\partial }{\partial x^i} - \frac{\partial H_k}{\partial x_i}\frac{\partial }{\partial p_i} + C^a\frac{\partial }{\partial u^a} 
$$
satisfies $\Gamma (\phi^{(k)}) = 0$.  However, this vector $\Gamma$ clearly satisfies eq. \eqref{presymplectic_k}, and then necessarily at the point $(x,p,u)$ it must be satisfied that $dH_k (Z) = 0$ for all $Z \in K_k$.

We will prove now the equivalence of the constraints algorithm with partial feedback on the controls and the previous two algorithms by induction on $k$.  At the first step of the algorithm the spaces $M_1$ and $\widetilde{M}_1$ are linearly isomorphic by the map $\alpha_1$ (eq. \eqref{alpha1}).   It was shown in Section \ref{first_iteration} that the consistency conditions for the reduced constraints $\widetilde{\phi}^{(1)}$ with respect to the vector field $\Gamma^{(1)}$ were equivalent to the secondary constratints $\phi^{(2)}$ defining $M_2$ (eq. \eqref{secondary_feed}) hence the equivalence at step 1.

Now suppose that we are at the step $k$ of the algorithm, thus we have on one side the subspace $M_k$ defined by the $k$--ary constraints $\phi^{(k)}$ and the subspace $\widetilde{M}_k \subset M^{(k)}$ defined by the reduced $k$--ary constraints $\widetilde{\phi}^{(k)}$ and the map $\alpha_k \colon \widetilde{M}_k \to M_k$ given by eq. \eqref{alphak} is a linear isomorphism.   Now the constraints algorithm consistency condition, eq. \eqref{recursive}, for $\phi^{(k)}$ implies that there exist $C^a$ such that $\dot{\phi}^{(k)} = 0$ when the derivative is taken in the direction of the vector 
$$
\Gamma = \dot{x}\frac{\partial }{\partial x^T} - \dot{p}\frac{\partial }{\partial p^T} + C\frac{\partial }{\partial u^T} 
$$
this is:
$$ \phi^{k+1} = \dot{\phi}^{(k)} = \Gamma(\phi^{(k)}) = S^{(k)} \matrix{c}{ \dot{x} \\\hline \dot{p}} - R^{(k)}C .$$
However by using the SVD factorization of $R^{(k)}$ we see immediately that there exists a solution of the previous linear equation if and only if:
$$
0= S_c^{(k)}\matrix{c}{ \dot{x} \\\hline \dot{p}} = \dot{\widetilde{\phi}}^{(k)},
$$
but this consistency condition for the partial feedback constraints algorithm, eq. \eqref{consistency_reducedk}, defines both the partial feedback eq. \eqref{feed_controlsk} and the $k+1$--ary reduced constraints, eq. \eqref{k+1reduced}, this is $\widetilde{M}_{k+1}$.  Moreover the map $\alpha_{k+1}\colon \widetilde{M}_{k+1} \to M_{k+1}$ defined by $\alpha_{k+1}(x,p,u^{(k+1)}) = (x,p,u^{k})$ with $u^{(k)} = [\widetilde{u}^{(0)}; u^{(k+1)}]$ and $\widetilde{u}^{(k)}$ given by the partial feedback  eq. \eqref{feed_controlsk} is a linear isomorphism and the equivalence is proved.
\end{proof}

When the algorithm stabilizes, this is after the step $\kappa$ we have: $M_\kappa = M_{\kappa+1} = \cdots $, the structure of the reduced system will be that of a presymplectic
system $(M_\kappa, \Omega_\kappa, H_\kappa )$ where $\Omega_\kappa$ denotes the restriction of $\Omega_0$ to the final constraints subspace $M_\kappa$ and the same for $H_\kappa$.   The presymplectic form
$\Omega_\kappa$ will have a kernel $K_\kappa$.    If $K_\kappa = 0$, then the form $\Omega_\kappa$
would be symplectic and we would have obtained a
consistent and explicit Hamiltonian form for the PMP with a well defined optimal feedback law.  

On the other hand if $K_\kappa \neq 0$ then the reduced consistent quasilinear DAE:
\begin{equation}\label{dyn_infty}
i_{\Gamma_\kappa} \Omega_\kappa = d H_\kappa ,
\end{equation}
will be just presymplectic,
meaning by that the there would persists an ambiguity in the dynamical equations. Such ambiguity will
come from $K_\kappa$ because if we have any consistent determination of $\dot{u}^a = C^a(x,p,u)$
for our system, adding to it any vector $Z\in K_\kappa$ will also be a consistent determination for
the dynamics.    We will call such ambiguity in the determination of the dynamics
the residual ``gauge'' symmetry of the system because of the resemblance with the case of singular Lagrangians and gauge symmetries.  We will describe explicitly the dependence of the differential conditions for the reduced PMP in terms of the constraints $\phi^{(k)}$ in Section \ref{first_second} after an appropriate extension of the presymplectic algorithm is presented.

The need for an extension of the presymplectic algorithm comes not only to analyze the gauge ambiguity of the reduced PMP but also because the presymplectic character of the form $\Omega_0$ obscures the numerical algorithm due to the singular nature of the corresponding matrices.  
It would be desirable to translate the previous analysis into a nonsingular (this is symplectic) setting.   This can be done by using a natural regularization of (arbitrary)
presymplectic systems based in the so called coisotropic embedding theorem \cite{Go81}.  The coisotropic embedding theorem roughly states that any presymplectic manifold can be embbeded in a symplectic manifold in an essentially unique way.   
We will take advantage of this theorem to transform our presymplectic system, either (\ref{presymplectic}) or (\ref{presymplectic_local}), into a genuine
symplectic Hamiltonian system and then we will reproduce the recursive constraints analysis discussed in Section \ref{consistency} leading to $(M_\kappa, \Omega_\kappa, H_\kappa )$. 


\subsection{Hamiltonian regularization of the presymplectic PMP}\label{regularization}

Due to the particular structure of the presymplectic
systems we are dealing with it won't be needed to use the coisotropic embedding theorem 
in its more general abstract setting.
In fact we will construct explicitly the various spaces needed and the theorem reduces
to the fact that such choices are natural and unique in an appropriate sense.

We want to extend the presymplectic manifold $(M_0 ,\Omega_0)$ to a symplectic manifold
$(\widetilde{M}, \widetilde{\Omega} )$ as well as the Hamiltonian $H_0$ and the constraints.
The total space $\widetilde{M}$ is obtained adding the dual space of the characteristic distribution  $K_0$ of
$\Omega_0$ to the control degrees of freedom, i.e., $\widetilde{M} \cong T^*P\times C\times K_0^* =
T^*P \times T^*C = T^*(P\times C)$.  Notice that the extension $\widetilde{M}$ thus constructed is already a symplectic manifold with the canonical symplectic form $\widetilde{\Omega}$ of $T^*(P\times C)$. The dual coordinates to the controls that we have
introduced will be denoted as $v_a$, $a = 1, \ldots, m$.  Finally, the symplectic extension $\widetilde{\Omega}$ of the presymplectic form $\Omega_0$ has the canonical form:
$$ 
\widetilde{\Omega} = dx^i \wedge dp_i + du^a \wedge dv_a .
$$
Moreover an extension of Pontryagine's Hamiltonian to $\widetilde{M}$ is given by
\begin{equation}\label{Htilde}
\widetilde{H}(x,p,u,v) = H_0(x,p,u) + C^a (x,p,u) v_a ,
\end{equation}
with $C^a = C^a(x,p,u)$ arbitrary functions, because the original space $M_0$ is sitting inside $\widetilde{M}$ as the submanifold defined by the
constraints
\begin{equation}\label{coisotropic_coordinates}
 \phi^{(0)}_a (x,p,u,v) = v_a .
\end{equation}
The extended Hamiltonian $\widetilde{H}$ defines a Hamiltonian dynamics $\widetilde{\Gamma}$ given by
\begin{equation}
i_{\widetilde{\Gamma}}\widetilde{\Omega}=d\,\widetilde{H},
\end{equation}
and the vector field $\widetilde{\Gamma}$ has the expression:
\begin{eqnarray}\label{Gammatilde}
\nonumber\widetilde{\Gamma} &=&
\frac{\partial\widetilde{H}}{\partial p_i}\frac{\partial}{\partial x^i}
-\frac{\partial\widetilde{H}}{\partial x^i}\frac{\partial}{\partial p_i}
+\frac{\partial\widetilde{H}}{\partial v_a}\frac{\partial}{\partial u^a}
-\frac{\partial\widetilde{H}}{\partial u^a}\frac{\partial}{\partial v_a}=\\
&=&\frac{\partial \widetilde{H}}{\partial p_i}\frac{\partial}{\partial x^i} -\frac{\partial \widetilde{H}}{\partial
x^i}\frac{\partial}{\partial p_i} +C^a\frac{\partial}{\partial u^a} -\frac{\partial \widetilde{H}}{\partial
u^a}\frac{\partial}{\partial v_a}. \end{eqnarray}
Since we have extended the space, we must impose some constraints to recover the original problem. These constraints are given, as it was said before, by the
coisotropic coordinates $v_a$, eq. (\ref{coisotropic_coordinates}), i.e.,
\begin{equation}\label{extendedM0}
M_0=\{(x,p,u,v)\in\widetilde{M}|\,\phi^{(0)}_a = v_a=0\} .
\end{equation}
These new constraints $\phi^{(0)}_a$ will be called zero order constraints (see Table \ref{summary} for a summary of the two pictures of the PMP: presymplectic and regularized Hamiltonian).

The vector field  $\widetilde{\Gamma}$ applied to the
zero order constraints $\phi^{(0)}_a = v_a$ give the primary constraints that we had before, eq. \eqref{liga}, $$
\widetilde{\Gamma}\,(v_a)=-\partial H/\partial u^a = -\phi^{(1)}.
$$ 
Thus, $\widetilde{\Gamma}$ is tangent to $M_0$ if and only if $\partial H/\partial u^a=0$, as we have established previously.

\begin{table}[h!]
\caption{The presymplectic and regularized Hamiltonian pictures of PMP for singular optimal control problems.}\label{summary}
\begin{tabular}{@{}l l|c|c|}\\
\cline{3-4}&&&\\&&Presymplectic& Regularized Hamiltonian\\\hline\vline&&&\\
\vline& Phase space &$M_0=T^*P\times C$&$M_{-1} = \widetilde{M}= T^*(P\times C)$\\\vline&&&\\
\hline\vline&&&\\\vline&2-form&$\Omega_0=dx^i\wedge dp_i$& $\widetilde{\Omega}=dx^i\wedge
dp_i+du^a\wedge dv_a$\\\vline&&&
\\\hline\vline&&&\\\vline&Hamiltonian&$H_0$&$\widetilde{H} = H_0 + C^a v_a$\\
\vline&  &&\\\hline \vline&&&
\\\vline&Constraints ($k\geq 1$)&$\phi^{(k)}=\phi^{(k)}(x,p,u)$&$ 
\phi^{(k)} + \sum_{l=1}^k \lambda^b_{al}\,\phi^{(k-l)}_b ,$\\\vline&&&\\
\vline&Constraint ($k=0$)&&$ \phi^{(0)}_{a}=v_a$
\\\vline&&&\\\hline\\
\end{tabular}
\end{table}

Now,  applying the constraints algorithm to the extended space and denoting $\widetilde{M}$ by $M_{-1}$ for consistency with the previous notation, we have that eq. \eqref{extendedM0} reads 
$M_0=\{(x,p,u,v)\in M_{-1} |\,\phi^{(0)}_a =0\}$.   Denoting by $i_0\colon M_0 \to M_{-1}$ the canonical embedding, we have $i_0^*\widetilde{\Omega} = \Omega_0$ and $i_0^*\widetilde{H} = H_0$.  Thus the presymplectic system $(M_0,\Omega_0,H_0)$ emerges as the result of applying the presymplectic algorithm to $(\widetilde{M}, \widetilde{\Omega}, \widetilde{H})$ with respect to the zero order constraints $\phi^{(0)}$.   From there we iterate the presymplectic algorithm obtaining the results described in the previous Section.  Thus we have:

\begin{theorem}\label{equivalence}
Given the presymplectic system $(M_0,\Omega_0,H_0)$ corresponding to a singular optimal control problem, then the recursive presymplectic algorithm of this system and the constraints algorithm of the regularized Hamiltonian system $(\widetilde{M},\widetilde{\Omega},\widetilde{H})$ with respect to the constraint submanifold $M_0 \subset \widetilde{M}$ are equivalent.
\end{theorem}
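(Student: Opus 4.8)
The plan is to present the regularized Hamiltonian algorithm as just one extra step on top of the presymplectic algorithm: once the coisotropic constraints $\phi^{(0)}_a=v_a=0$ have been imposed, running the Gotay--Nester (presymplectic) algorithm on $(\widetilde M,\widetilde\Omega,\widetilde H)$ becomes \emph{literally} the recursion \eqref{recursive_sym} for $(M_0,\Omega_0,H_0)$ described in Section \ref{recursive_presymplectic}. The first thing I would do is record the two faithfulness identities already used in the construction. Since $M_0=\{v_a=0\}$ and $\widetilde\Omega=dx^i\wedge dp_i+du^a\wedge dv_a$, pulling back along the inclusion $i_0\colon M_0\to\widetilde M$ annihilates $du^a\wedge dv_a$, so $i_0^*\widetilde\Omega=dx^i\wedge dp_i=\Omega_0$; and since $\widetilde H=H_0+C^a v_a$ with $C^a=C^a(x,p,u)$, one has $i_0^*\widetilde H=H_0$ for \emph{every} choice of the auxiliary functions $C^a$. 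Hence the data $(M_0,\widetilde\Omega|_{M_0},\widetilde H|_{M_0})$ fed to the constraint algorithm after the first restriction are exactly $(M_0,\Omega_0,H_0)$, and no ambiguity is introduced by the $C^a$'s because they only survive off $M_0$.

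Next I would carry out the first algorithm step on $\widetilde M$ with initial constraint submanifold $M_0$, in either of two equivalent ways. As ``find $\widetilde\Gamma$ tangent to $M_0$ with $i_{\widetilde\Gamma}\widetilde\Omega=d\widetilde H$'': by the computation $\widetilde\Gamma(v_a)=-\partial H/\partial u^a=-\phi^{(1)}_a$ recorded just before the theorem, tangency forces $\phi^{(1)}_a=\partial H_0/\partial u^a=0$ on $M_0$. Or by restriction: $\ker\Omega_0$ is spanned by the $\partial/\partial u^a$, and the consistency condition $dH_0(\partial/\partial u^a)=0$ is again $\phi^{(1)}_a=0$. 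Either way the zero--order constraints $\phi^{(0)}_a$ regenerate the primary constraints $\phi^{(1)}_a$ of \eqref{liga}, and the extended algorithm has reached the presymplectic system $(M_1,\Omega_1,H_1)$ of Section \ref{recursive_presymplectic}.

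From here I would finish by induction on $k$. Since $M_k\subset M_0=\{v=0\}$ for all $k\ge 0$, the restricted objects are $\widetilde\Omega|_{M_k}=\Omega_0|_{M_k}=\Omega_k$ and $\widetilde H|_{M_k}=H_0|_{M_k}=H_k$ (the $C^a v_a$ term and the $v$--directions drop out), so the step producing $M_{k+1}$ is in both cases the same recursion $M_{k+1}=\{x\in M_k\mid dH_k(Z)=0\ \forall Z\in K_k\}$, eq.~\eqref{recursive_sym}. Therefore the two chains $M_0\supset M_1\supset\cdots\supset M_\kappa$ agree term by term, the final constraint submanifold $M_\kappa$ and the reduced dynamics \eqref{dyn_infty} coincide, and combining this with Theorem~\ref{equivalence_constraints} all of the recursive, presymplectic (with or without partial feedback) and regularized Hamiltonian descriptions produce the same answer.

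The only point requiring care is bookkeeping rather than geometry, and this is what I expect to be the main obstacle to a clean write--up. In the regularized picture the $(k+1)$--ary constraint comes out as the Poisson bracket $\widetilde\Gamma(\phi^{(k)})=\{\phi^{(k)},\widetilde H\}$, and away from $M_0$ this differs from the clean $\phi^{(k+1)}(x,p,u)$ of the presymplectic picture by terms proportional to $C^a$ and to the lower constraints $\phi^{(0)},\dots,\phi^{(k)}$ --- precisely the Lagrange--multiplier combinations $\phi^{(k+1)}+\sum_{l=1}^{k+1}\lambda^b_{al}\phi^{(k+1-l)}_b$ displayed in Table~\ref{summary}. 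I would dispose of this exactly as in the proof of Theorem~\ref{equivalence_constraints}, by checking that on $M_k$ the vanishing of $\widetilde\Gamma(\phi^{(k)})$ is equivalent to the vanishing of $\dot{\phi}^{(k)}$ taken in the direction of \eqref{pmp} with $\dot{u}^a=C^a$, which is the defining condition of $M_{k+1}$ in \eqref{recursive}; all the extra terms lie in the ideal generated by the constraints cutting out $M_k$, so they do not move the zero locus. Thus the constraint submanifolds --- all the algorithm depends on --- are unaffected, while the residual freedom in the $C^a$ and the multipliers $\lambda^b_{al}$ is exactly the kernel $K_k=\ker\Omega_k$ that will be organized into first-- and second--class constraints in Section~\ref{first_second}.
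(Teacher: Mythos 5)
Your proposal is correct and follows essentially the same route as the paper: the key step in both is the computation that tangency of $\widetilde{\Gamma}$ to $M_0=\{v_a=0\}$ forces $\partial H_0/\partial u^a=0$ (using $\ker\Omega_0=\mathrm{span}\{\partial/\partial u^a\}$ and $i_{\partial/\partial u^a}\widetilde{\Omega}=dv_a$), after which the restricted data coincide with $(M_0,\Omega_0,H_0)$ and the algorithms run in lockstep. If anything, your write-up is more complete than the paper's, which only verifies this first step explicitly and asserts the rest; your inductive bookkeeping of the terms lying in the ideal of the lower-order constraints is a genuine (and welcome) addition, not a departure.
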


\begin{proof}
Consider the submanifold $\widetilde{M}_0$ of $\widetilde{M}$.  If we impose
$\G_{\widetilde{H}}$ to be tangent to it, this happens only at the points  $\xi\in \widetilde{M}$
such that $Z(\widetilde{H})=0$, where $Z\in K=\ker\,\widetilde{\Omega}\mid_{M_0} = \Omega_0$.
But $\ker\Omega_0$ is spanned by the vectors $\partial/\partial u^a$ wich are the Hamiltonian vector fields corresponding to the coisotropic coordinates, this is $i_{\partial/\partial u^a} \widetilde{\Omega} = dv_a$.  Thus $\G_{\widetilde{H}}$ will be tangent to
$M_0$ if $\G_{\widetilde{H}}(v_a)= \partial \widetilde{H}/\partial u^a$ (see eq. \eqref{Gammatilde}), but
$\widetilde{H}=H_0+C^av_a$, then $\G_{\widetilde{H}}(v_a)= \p{H_0}/\p{u^a} + v_b \p{C^b}/\p{u^a}$,
thus on $M_0$, we have $\G_{\widetilde{H}}(v_a)=0$ if and only if $\p{H_0}/\p{u^a}=0$.  Hence the next step of the algorithm will agree with the first step of the recursive presymplectic algorithm (see Secction \ref{recursive_presymplectic}) and the equivalence is proved.
\end{proof}

See the Table \ref{summary} for a summary of the two
pictures of the PMP: presymplectic and regularized Hamiltonian.
Notice that any extension of Pontryagine's Hamiltonian and the constraints would be valid because they would lead to the same final constraint submanifold $M_{\kappa}\subset \cdots \subset M_1\subset M_0\subset M_{-1}$.  We have chosen the extension eq. \eqref{Htilde} for convenience because if provides the simplest setting for the numerical implementation of the algorithm.


\subsection{The Hamiltonian nature of the constraints algorithm with partial feedback on the controls}

In this section we will show that the constraints algorithm with partial feedback on the controls is Hamiltonian, i.e., we prove that at each iteration the vector field $\Gamma^{(k)}$ is Hamiltonian, hence the differential conditions describing the reduced PMP obtained by using the algorithm are Hamiltonian.  

We have already shown that the presymplectic constraints algorithm given by eq. \eqref{recursive_sym}, the constraints algorithm eq. \eqref{recursive} and the constraints algorithm with partial feedback on the controls, are equivalent (Thm. \ref{equivalence_constraints}).   However the equivalence was stated at the level of the subspaces determined by the constraints (and the constraints themselves) of each one of the algorithms.  The dynamics (or differential conditions) for each one where determined in a different way.  In the constraints algorithm, the dynamics was determined by the consistency condition itself that precisely selected the points such that a vector that preserves the constraints and that belongs to the initial dynamics exists.  The presymplectic algorithm determines the dynamics by means of the dynamical equation eq. \eqref{presymplectic_k} at each iteration (and is explicitly Hamiltonian because of this), and the constraints algorithm with partial feedback on the controls determines the dynamics at each step by solving part of the controls (eq. \eqref{gamma_k}).   Because the subspaces $M_k$ defined by the presymplectic and the constraints algorithm are the same, the presymplectic forms of both coincide and because the dynamics defined by the constraints algorithm is a particular instance of the original differential condition that satisfies the presymplectic equation, the induced dynamics will be Hamiltonian too.   However the partial feedback algorithm defines the dynamics in a different way but it is Hamiltonian too as it is shown in the following proposition:

\begin{theorem}\label{ham_feed}
At each iteration of the constraints algorithm with partial feedback on the controls it is satisfied:
$$
J \Gamma^{(k)} = \nabla H^{(k)} ,
$$
where $H^{(k)}$ denotes Pontryagine's Hamiltonian where the partial feedbacks has been applied to the controls $\tilde{u}^{(0)}, \ldots, \tilde{u}^{(k-1)}$ and $\nabla H^{(k)}$ is the column vector  $\nabla H^{(k)} =[\partial H^{(k)}/\partial x^T ; \partial H^{(k)}/\partial p^T ]$.
\end{theorem}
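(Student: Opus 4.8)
The plan is to prove the identity by induction on the iteration index $k$, understood on the constraint submanifold $\widetilde{M}_k$ on which the reduced dynamics actually lives (in particular on the final constraint submanifold $M_\kappa$); as will become clear, the only term obstructing an identity on all of $M^{(k)}$ is a multiple of a reduced constraint, so nothing is lost. The base case $k=0$ is just the matrix form of the Hamilton equations \eqref{pmp} for $H^{(0)}=H_0$, i.e. eq. \eqref{presymplectic_local} rewritten as $J\Gamma^{(0)}=\nabla H^{(0)}$, which follows by a direct computation with $G^{(0)}$, $Z^{(0)}$ and $H_0$.

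For the inductive step, the first observation is that, by the very definitions \eqref{Gk+1}, \eqref{Zk+1}, the field $\Gamma^{(k)}$ is obtained from $\Gamma^{(k-1)}$ merely by inserting the partial feedback $\widetilde{u}^{(k-1)}=\feed^{(k)}\matrix{c}{x\\p}$ (after the orthogonal change of controls $V^{(k)}$), and, by definition of the partially--fed--back Hamiltonian, $H^{(k)}$ is obtained from $H^{(k-1)}$ by the very same substitution. Since $J$ acts only on the $(x,p)$ block, $J\Gamma^{(k)}$ equals $J\Gamma^{(k-1)}$ evaluated at the substituted point, which by the inductive hypothesis equals $\nabla H^{(k-1)}$ at that point (the nesting of the reduced constraint sets, Theorem \ref{equivalence_constraints}, guarantees the substituted point lies in $\widetilde{M}_{k-1}$, so the hypothesis applies there). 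The chain rule applied to the substitution then gives
$$
\nabla H^{(k)} = \bigl(\nabla H^{(k-1)}\bigr)\big|_{\mathrm{subst}} + \bigl(\feed^{(k)}\bigr)^{T}\, V^{(k)\,T}(1:r_k,:)\,\bigl(\partial H^{(k-1)}/\partial u^{(k-1)}\bigr)\big|_{\mathrm{subst}} ,
$$
so $\nabla H^{(k)}=J\Gamma^{(k)}$ follows once the cross term (the last summand) is shown to vanish on $\widetilde{M}_k$.

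This last point is the heart of the matter. The Hamiltonian $H^{(k-1)}$ depends on the remaining controls only through the original control vector, which enters affinely as $u^{(0)}=\mu(x,p)+E\,u^{(k-1)}$, where $E$ is a product of the ``lower'' column blocks of the orthogonal matrices $V^{(1)},\ldots,V^{(k-1)}$. At the first step the right singular vectors of $R=R^{(1)}$ belonging to the zero singular values span $\ker R$, so every column of $E$ lies in $\ker R$; hence $RE=0$, and, because $R$ is symmetric, also $E^{T}R=0$ and $\ker R\perp\mathrm{range}(R)$. Therefore $\partial H^{(k-1)}/\partial u^{(k-1)}=E^{T}(\partial H_0/\partial u^{(0)})=E^{T}\bigl(S^{(1)}\matrix{c}{x\\p}-R\,u^{(0)}\bigr)=E^{T}S^{(1)}\matrix{c}{x\\p}$, which is independent of the controls and vanishes precisely when $S^{(1)}\matrix{c}{x\\p}\in\mathrm{range}(R)$, i.e. when the reduced primary constraint $\widetilde{\phi}^{(1)}=S_c^{(1)}\matrix{c}{x\\p}$ vanishes (the case $k=1$ is the same, with $\partial H_0/\partial u$ evaluated after the first feedback substitution, using $\ker R\perp\mathrm{range}(R)$ once more). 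Since by construction the reduced constraint sets are nested (Theorem \ref{equivalence_constraints}), $\widetilde{M}_k\subseteq\{\widetilde{\phi}^{(1)}=0\}$, so $\partial H^{(k-1)}/\partial u^{(k-1)}$ — hence the cross term — vanishes on $\widetilde{M}_k$, closing the induction.

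The step I expect to be the main obstacle is exactly this control--gradient computation: one must recognize $\partial H^{(k-1)}/\partial u^{(k-1)}$ as a multiple of the (reduced) primary constraint, and it is the symmetry of $R$ — equivalently $\ker R\perp\mathrm{range}(R)$ — that makes the feedback substitution ``first order'' and kills the unwanted chain--rule contributions. The remaining manipulations are routine bookkeeping with the SVD factorizations. Conceptually, what is really being checked is that the map $\alpha_k$ of eq. \eqref{alphak} identifies the presymplectic system on $M_k$ with the fed--back system, so that the explicit Hamiltonian character of eq. \eqref{presymplectic_k} is transported to $\Gamma^{(k)}$; the induction above is just the coordinate version of this.
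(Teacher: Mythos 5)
Your proof is correct and follows the same inductive skeleton as the paper's: verify $J\Gamma^{(0)}=\nabla H^{(0)}$ directly, then pass from step $k-1$ to step $k$ by observing that both $\Gamma^{(k)}$ and $H^{(k)}$ arise from their predecessors by the same feedback substitution. The difference is that the paper disposes of the inductive step in one line --- ``taking the differential commutes with the restriction map'' --- which, applied to the partial gradient $\nabla=[\partial/\partial x^T;\partial/\partial p^T]$ rather than to the full differential, is precisely the nontrivial point: the substitution $\tilde{u}^{(k-1)}=\feed^{(k)}\matrix{c}{x\\p}$ produces a chain--rule cross term $\bigl(\feed^{(k)}\bigr)^{T}V^{(k)}(:,1\!:\!r_k)^{T}\,\partial H^{(k-1)}/\partial u^{(k-1)}$ that the paper never mentions. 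You supply the missing argument: since the remaining controls enter $H^{(k-1)}$ only through $u^{(0)}=\mu(x,p)+E\,u^{(k-1)}$ with the columns of $E$ in $\ker R$, and since $R$ is symmetric so that $E^{T}R=0$, the control gradient $\partial H^{(k-1)}/\partial u^{(k-1)}=E^{T}S^{(1)}\matrix{c}{x\\p}$ is a multiple of the reduced primary constraint and the cross term dies on $\widetilde{M}_k$. This buys two things the paper's proof does not make explicit: the precise domain of validity of the identity (it holds on the constraint submanifold, not on all of $M^{(k)}$ for $k\geq 2$), and the identification of the symmetry of the cost matrix $R$ as the structural reason the feedback substitution is compatible with the Hamiltonian form. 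Your closing remark that this is the coordinate version of transporting eq. \eqref{presymplectic_k} through $\alpha_k$ matches the paper's surrounding discussion.
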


\begin{proof}  We will prove it by induction.  At the step zero, it is trivial to show that:
$$
J \Gamma^{(0)} = \nabla H^{(0)}
$$
where 
$$
\Gamma^{(0)} = \matrix{c}{\dot{x} \\ \dot{p}} = \matrix{c}{Ax + Bu \\ Qx - A^Tp + Nu}
$$
and $H^{(0)} := H_0$.

As stated in the proposition, $H^{(k)}$ is obtained from $H^{(k-1)}$ by applying the partial feedback to the controls $\tilde{u}^{k-1}$, hence by restricting $H^{(k-1)}$ to the subspace $\widetilde{M}_k \subset \widetilde{M}_{k-1}$.  Taking the differential commutes with the restriction map, thus if we apply the restriction map to the equation $ J \Gamma^{(k-1)} = \nabla H^{(k-1)}$ on both sides, we will get $J \Gamma^{(k)} = \nabla H^{(k)} $ because the restriction of $\Gamma^{(k-1)}$ is exactly $\Gamma^{(k)}$ (recall that $\Gamma^{(k)}$ is defined just applying the partial feedback to $\Gamma^{(k-1)}$).
\end{proof}

In what follows we will simply call the ``Hamiltonian constraints algorithm'' to the recursive constraints algorithm with partial feedback on the controls applied to the extended space $M_{-1}$ with zero order constraints $\phi^{(0)}_a = v_a$, and no mention will be made to the partial feedback on the controls unless it is needed.

Diagrame \ref{diagramme} summarizes the various algorithms with all the maps and spaces presented so far.  The left--hand side of the diagrame shows the presymplectic constraints algorithm and the right--hand side the partial feedback on the controls.  Notice that the first row consists on the symplectic space $T^*{P\times C}$ that is denoted either by $M_{-1}$ or by $M^{(-1)}$ depending if we want to emphasize the starting point of the presymplectic or the partial feedback algorithm respectively.    The spaces on the second row are again all the same, the total space $M_0 = T^*P \times C$ of the problem, but again, on the left--hand side it appears as the result of applying the zero order constraints $\phi^{(0)} = 0$ to $M_{-1}$, while in the right--hand side it appears as the result of using the partial feedback $v_a = 0$ to the new ``control'' variables.     

The column on the left shows the sequence of spaces $M_k = \{ \phi^{(k)} = 0\}$ and natural inclusions $i_k\colon M_k \to M_{k-1}$ obtained by applying the recursive constraint algorithm eq. \ref{recursive}.  The central column shows the sequence of spaces $\widetilde{M}_k= \{ \widetilde{\phi}^{(k)} = 0\}$ and inclusions $\tilde{i}_k \colon \widetilde{M}_k \to \widetilde{M}_{k-1}$ obtained by applying the partial feedback algorithm eq. \eqref{Mtilde_k} as well as the isomorphisms $\alpha_k$ between  $\widetilde{M}_k$ and $M_k$, eq. \eqref{alphak}.  

Finally, the column on the right shows the sequence of spaces $M^{(k)} = \mathbb{R}^{2n} \times \mathbb{R}^{m_k}$ with the controls $u^{(k)}$ left after the previous partial feedbacks and the corresponding inclusions $j_k\colon M^{(k)} \to M^{(k-1)}$.  The bottom row shows the final constraints space $M_\kappa$ that describes the reduced PMP, or equivalently the space $\widetilde{M}_\kappa$ which is contained in $M^{(\kappa)}$ which is the space whose points are $(x,p,u^{(\kappa)})$, with $u^{(\kappa)}$ the set of controls that remain free after the reduction process.   

\begin{figure}
\begin{center}
$$\xymatrix{ 
\mathrm{Constraints} & M_{-1} = \mathbb{R}^{2n}\times \mathbb{R}^{2m_0} = M^{(-1)} & \mathrm{Partial \,\, feedback} \\
M_0\ar[ur]^{\phi^{(0)}=0}_{i_0} & \mathbb{R}^{2n}\times \mathbb{R}^{m_0}\ar[u]\ar[r]\ar[l] & M^{(0)}\ar[ul]^{j_0}_{\quad\quad  \widetilde{u}^{(-1)} = v^{(0)} = 0}  \\
M_1\ar[u]^{\phi^{(1)}=0}_{i_1} & \widetilde{M}_1\ar[u]^{\widetilde{\phi}^{(1)}=0}_{\tilde{i}_1}\ar[r]^{\tilde{j}_1}\ar[l]_{\alpha_1} & M^{(1)}\ar[u]^{j_1}_{\,\, \widetilde{u}^{(0)} = \feed^{(1)} \, \zeta} \\  
M_2\ar[u]^{\phi^{(2)}=0}_{i_2} & \widetilde{M}_2\ar[u]^{\widetilde{\phi}^{(2)}=0}_{\tilde{i}_2}\ar[r]^{\tilde{j}_2}\ar[l]_{\alpha_2}  & M^{(2)}\ar[u]^{j_2}_{\,\, \widetilde{u}^{(1)} = \feed^{(2)} \, \zeta}  \\ 
\vdots\ar[u] & \vdots\ar[u] & \vdots\ar[u] \\
M_{\kappa-1}\ar[u] & \widetilde{M}_{\kappa-1}\ar[u]\ar[l]\ar[r] & M^{(\kappa-1)}\ar[u] \\
M_\kappa\ar[u]^{\phi^{(\kappa)}=0}_{i_\kappa} & \widetilde{M}_\kappa\ar[u]^{\widetilde{\phi}^{(\kappa)}=0}_{\tilde{i}_\kappa}\ar[r]^{\tilde{j}_\kappa}\ar[l]_{\alpha_\kappa}  & M^{(\kappa)}\ar[u]^{j_\kappa}_{\,\, \widetilde{u}^{(\kappa-1)} = \feed^{(\kappa)} \, \zeta} 
}
$$
\caption{The recursive Hamiltonian and the partial feedback constraints algorithms} \label{diagramme}
\end{center}
\end{figure}
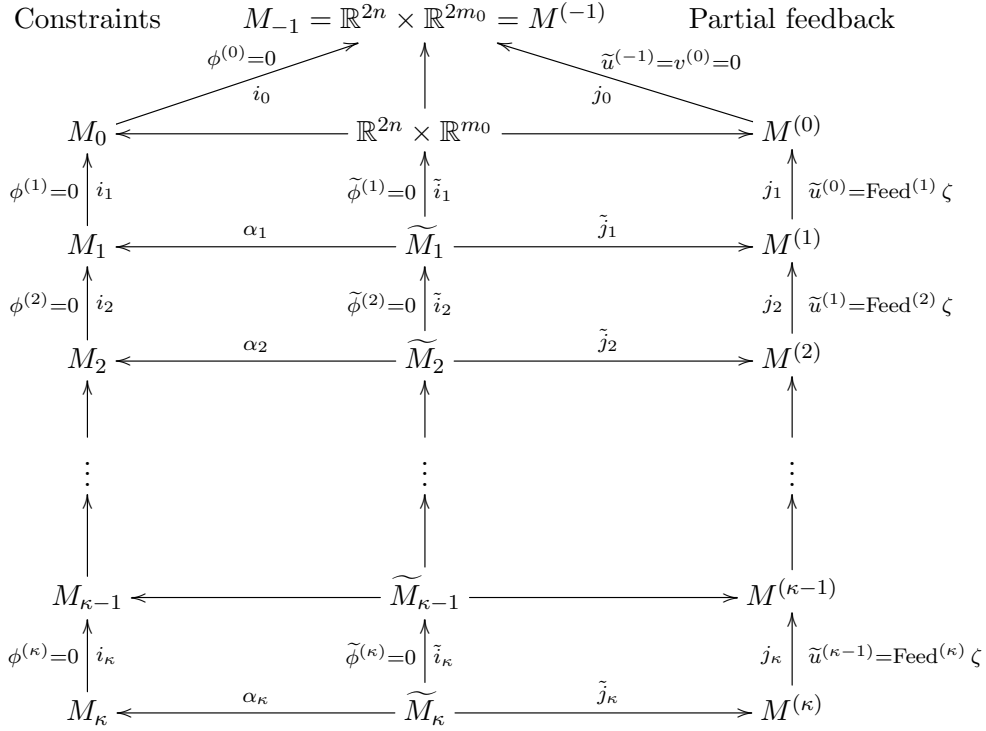


\subsection{First and second class constraints and the structure of the reduced PMP}\label{first_second}

Once obtained the reduced presymplectic system $(M_{\kappa}, \Omega_\kappa, H_\kappa)$ we analyze the family of vector fields $\Gamma_\kappa$ satisfying the dynamical equation eq. \eqref{dyn_infty}.

First we  classify the set of constraints $\{ \phi^{(k)}_a \mid k = 0,1,2,\ldots \}$ as first or second class constraints accordingly with the characteristics of the Hamiltonian vector fields defined by them. 
Again the terminology is reminiscent of the theory of constraints for singular Lagrangian systems (see for instance \cite{Sn74} and references therein).  Such decomposition of the set of constraints will
lead us to a neat characterization of the structure of the presymplectic manifold $(M_\kappa, \Omega_\kappa)$ being symplectic if all constraints are second class, while it will possess a nontrivial kernel $K_\kappa \neq 0$ if and only if there are non--vanishing first class constraints.  

Consider $(\widetilde{M},\Omega)$ a symplectic manifold and a
regular submanifold $i\colon N\hookrightarrow \widetilde{M}$. 
We will say that a function $\phi$ vanishing on $N$ is a first class constraint if 
for any function $\psi$ vanishing on $N$, $\psi|_N = 0$, the Poisson bracket of both functions vanish on $N$, i.e., $\{ \phi, \psi \}|_N = 0$. 
Otherwise case we will 
call $\phi$ a second class constraint. 

It is well--known that given a family of independent constraints $\Phi = \{ \phi_j \mid j = 1, \ldots p \}$,  we can always decompose it
 into two subfamilies $\Phi = \set{\nu_A,\, \chi_\alpha \mid A = 1, \ldots, r, \, \alpha = 1, \ldots ,2s}$, $r + 2s = p$, where $\nu_A$
and $\chi_\alpha$ are first and second class constraints respectively and they verify that:
\begin{eqnarray} \label{comm_relations}
\{\nu_A,\nu_B\} & = & C_{AB}^C\nu_{C},\\ \nonumber
\{\nu_A,\chi_\alpha\} & = & C_{A\alpha}^B\nu_{B},\\ \nonumber
\{\chi_\alpha,\chi_\beta\} & = & C_{\alpha \beta}
\end{eqnarray}
with $C_{\alpha \beta}$ is non-degenerate (see more details in \cite{Sn74}).

\begin{theorem}  Given the final reduced presymplectic system $(M_\kappa, \Omega_\kappa, H_\kappa)$ associated to a singular optimal control problem $K _\kappa = \ker \Omega_\kappa$ is generated by the Hamiltonian vector fields $\G_{\nu_A}$ corresponding to the first class constraints $\nu_A$.   The vector fields $\Gamma_\kappa$ whose integral curves are the optimal extremals of the problem, have the form:
\begin{equation}\label{reduced_vector}
\G_{\kappa} = \G_{H_\kappa} +\sum_{A= 1}^r\l^A\G_{\nu_A}.
\end{equation}
where $\nu_A$, $A = 1, \ldots, r$, are the first class constraints of the system.
\end{theorem}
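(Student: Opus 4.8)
The plan is to realize $M_\kappa$ as a submanifold of the symplectic regularization $(\widetilde{M},\widetilde{\Omega})$ of Section \ref{regularization} and to read off the kernel of the restricted $2$--form directly from the Poisson brackets of the constraints. Concretely, in the regularized Hamiltonian picture (Table \ref{summary}) the final constraint submanifold sits inside the symplectic manifold $\widetilde{M}$ as
\[ M_\kappa = \{\, \xi\in\widetilde{M} \mid \phi^{(k)}_a(\xi) = 0,\ 0\le k\le\kappa \,\}, \]
that is, it is cut out in $\widetilde{M}$ by the finite family consisting of the zero order constraints $\phi^{(0)}_a = v_a$ together with all the higher constraints produced by the algorithm; replacing this list by a maximal independent subfamily $\Phi = \{\phi_j\}$ if necessary (the modified constraints of Table \ref{summary} define the same submanifold), and writing $i\colon M_\kappa\hookrightarrow\widetilde{M}$ for the inclusion, one has $\Omega_\kappa = i^*\widetilde{\Omega}$ and $H_\kappa = i^*\widetilde{H}$.

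Next comes the standard linear--symplectic computation. At a point $\xi\in M_\kappa$ one has $T_\xi M_\kappa = \bigcap_j \ker d\phi_j(\xi)$, hence its $\widetilde{\Omega}$--orthogonal complement is $(T_\xi M_\kappa)^{\perp} = \mathrm{span}\{\G_{\phi_j}(\xi)\}$, and $\ker(\Omega_\kappa)_\xi = T_\xi M_\kappa \cap (T_\xi M_\kappa)^{\perp}$. Thus $Z = \sum_j c^j\G_{\phi_j}(\xi)$ lies in $\ker(\Omega_\kappa)_\xi$ if and only if $Z$ is tangent to $M_\kappa$, i.e. $\sum_j c^j\{\phi_i,\phi_j\}(\xi) = 0$ for all $i$. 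Feeding in the first/second class splitting $\Phi = \{\nu_A,\chi_\alpha\}$ with the bracket relations \eqref{comm_relations}, and writing $Z = \sum_A c^A\G_{\nu_A} + \sum_\alpha d^\alpha\G_{\chi_\alpha}$: testing tangency against $\nu_B$ yields $\sum_A c^A\{\nu_B,\nu_A\} + \sum_\alpha d^\alpha\{\nu_B,\chi_\alpha\}$, which by \eqref{comm_relations} is a linear combination of the $\nu_C$'s and so vanishes identically on $M_\kappa$; testing against $\chi_\beta$ yields, on $M_\kappa$, $\sum_\alpha d^\alpha C_{\beta\alpha} = 0$, whence $d^\alpha = 0$ for all $\alpha$ because $C_{\alpha\beta}$ is nondegenerate. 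Therefore $\ker(\Omega_\kappa)_\xi = \mathrm{span}\{\G_{\nu_A}(\xi)\}$, and the same relations show each $\G_{\nu_A}$ preserves every constraint of $\Phi$, hence is tangent to $M_\kappa$; so the $\G_{\nu_A}$ restrict to genuine vector fields on $M_\kappa$ that generate $K_\kappa = \ker\Omega_\kappa$, which is the first assertion.

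For the second assertion, since $M_\kappa$ is the final constraint submanifold the equation \eqref{dyn_infty} is consistent and therefore admits at least one solution; fix one and call it $\G_{H_\kappa}$ --- for instance the Hamiltonian vector field $\Gamma^{(\kappa)}$ supplied by Theorem \ref{ham_feed}, or the restriction to $M_\kappa$ of $\G_{\widetilde{H}}$ corrected by the second class Hamiltonian vector fields so as to become tangent. If $\G_\kappa$ is any other solution of \eqref{dyn_infty}, then $i_{\G_\kappa - \G_{H_\kappa}}\Omega_\kappa = 0$, i.e. $\G_\kappa - \G_{H_\kappa}\in K_\kappa$, so by the first part $\G_\kappa = \G_{H_\kappa} + \sum_A \l^A \G_{\nu_A}$ for suitable functions $\l^A$ on $M_\kappa$, which is exactly \eqref{reduced_vector}.

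I expect the main obstacle to be the bookkeeping at the outset: verifying that the full list $\phi^{(0)},\ldots,\phi^{(\kappa)}$ really does carve out $M_\kappa$ as a regular submanifold of $\widetilde{M}$ and, after passing to a maximal independent subfamily, that the decomposition into first and second class constraints with nondegenerate $C_{\alpha\beta}$ is available as stated (this is where \eqref{comm_relations} and the cited structure theory enter). Closely related is the need to be explicit about what $\G_{H_\kappa}$ means, since on the presymplectic manifold $(M_\kappa,\Omega_\kappa)$ it is only defined modulo $K_\kappa$; once a consistent particular choice is pinned down, the symplectic--orthogonal argument above is routine.
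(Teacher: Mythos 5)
Your proof is correct, but it takes a genuinely different route from the paper's in both halves. For the first assertion the paper simply invokes, with a citation to \cite{Sn74}, the general fact that Hamiltonian vector fields of first class constraints generate the characteristic distribution of $\Omega_\kappa$; you instead prove it from scratch via the identity $\ker(\Omega_\kappa)_\xi = T_\xi M_\kappa \cap (T_\xi M_\kappa)^{\perp}$ in the ambient symplectic space $\widetilde{M}$ together with the bracket relations \eqref{comm_relations}, which makes the argument self-contained (it is essentially the proof of the cited fact, and your preliminary bookkeeping about realizing $M_\kappa$ inside $\widetilde{M}$ by the full independent family of constraints including $\phi^{(0)}_a = v_a$ matches the paper's setup, where $\dim M_\kappa = 2n+2m-p$). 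For the second assertion the paper follows the Dirac recipe: it writes the most general ambient Hamiltonian $H = H_\kappa + \l^A\nu_A + \l^\alpha\chi_\alpha$, restricts its Hamiltonian vector field to $M_\kappa$, and kills the second class multipliers by imposing tangency, $0 = \G(\chi_\beta)\mid_{M_\kappa} = \l^\alpha C_{\alpha\beta}$; you instead observe that any two solutions of $i_{\G_\kappa}\Omega_\kappa = dH_\kappa$ differ by an element of $K_\kappa$ and then quote your first part. Your route is shorter and avoids the tacit premise of the paper's argument that every admissible $\G_\kappa$ arises as the restriction of the Hamiltonian vector field of some extension of $H_\kappa$ by constraint terms, while the paper's version buys an explicit interpretation of the $\l^A$ as Dirac multipliers attached to the first class constraints. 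Your closing remark that $\G_{H_\kappa}$ must be pinned down as one particular consistent solution (e.g.\ $\Gamma^{(\kappa)}$ from Theorem \ref{ham_feed}) is exactly the point the paper glosses over with the phrase that $\G_{H_\kappa}$ ``already leaves them invariant by construction,'' so it is worth keeping that sentence in a final write-up.
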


\begin{proof}  It is a general fact that Hamiltonian vector fields corresponding to first class constraints generate the characteristic distribution of the presymplectic form $\Omega_\kappa$ (see for instance \cite{Sn74}).

On the other hand, the most general hamiltonian in $M_\kappa$ extending $H_\kappa$ is
$$
H= H_\kappa + \l^A\nu_A + \l^\alpha\chi_\alpha ,
$$
and the vector field associated  to it is
$$
\G = \G_{H_\kappa} + \l^A\G_{\nu_A} + \l^\alpha \G_{\chi_\alpha} + \nu_A\G_{\l^A} + \chi_\alpha\G_{\l^\alpha}.
$$
Thus, restricted to $M_\kappa$ this vector field has the form
$$\left.\G\right|_{M_{\kappa}}=\G_{H_\kappa}+\l^A\G_{\nu_A}+\l^\alpha\G_{\chi_a}.$$
Since this vector field must leave invariant the set of constraints because it must be tangent to $M_\kappa$, then:
$$
\G(\nu_B) = \G(\chi_b)=0,
$$
on $M_\kappa$.  But $\G_{H_\kappa}$ already leaves them invariant by construction, then we have $0= \Gamma (\chi_\beta )\mid_{M_\kappa} = \{ \l^A\nu_A + \l^\alpha\chi_\alpha, \chi_\beta \}\mid_{M_\kappa} = \lambda^\beta  C_{\alpha \beta}$ and consequently $\l^\alpha=0$.
\end{proof}

Hence the reduced PMP $(M_\kappa, \Omega_\kappa, H_\kappa)$ of a singular optimal control problem could be either:
\begin{enumerate}

\item Nondegenerate: $M_\kappa$ is a symplectic submanifold of $\widetilde{M}=M_{-1}$ with $\Omega_\kappa$ a closed and non--degenerate
2--form and a hamiltonian $H_\kappa$.  The vector field $\Gamma_\kappa$ eq. \eqref{reduced_vector}, whose integral curves are optimal extremals for the problem is uniquely determined and there are no undetermined controls.  There must be an optimal feedback law determining the controls (even though some of them as well as some variables $x$'s and/or $p$'s could have been removed along the reduction procedure).

\item Degenerate:  $M_\kappa$ is a  presymplectic submanifold of $\widetilde{M} = M_{-1}$. The 2--form is presymplectic and its kernel $K_\kappa$ is generated by first class constraints.   The differential conditions implementing the reduced PMP are undetermined and there will be a ``gauge'' freedom determined by the first class constraints of the system with the form given by eq. \eqref{reduced_vector} with the $\lambda^
A$ being the ``effective controls'' of the theory.  The effective controls $\lambda^A$ could be either some of the original controls $u^a$ or new functions of the variables $x,p,u$.
\end{enumerate}

In the following sections we will construct a numerical algorithm that will provide not only
$M_\kappa$ but the decomposition of the family of constraints into first and second class and the explicit form of $\Gamma_\kappa$.


\section{The numerical Hamiltonian constraints algorithm}


\subsection{The Hamiltonian constraints algorithm and first and second class constraints}

In this section we will complete the numerical implementation of the Hamiltonian constraints algorithm with partial feedback on the controls in the setting of singular LQ systems.   After the discussion in the previous Section, what is left to conclude the implementation of the algorithm is the analysis of the computation of first and second class constraints.

If we denote by $\{ \phi_i \mid i = 1, \ldots, p \}$ the family of constraints describing the submanifold
$M_\kappa \subset \widetilde{M}$ (notice that $\dim M_\kappa = 2n + 2m - p$), we form the matrix $\pois = [\pois_{ij}]$ whose entries are given by:
\begin{equation}\label{poisson_matrix}
\pois_{ij} = \{ \phi_i, \phi_j \} .
\end{equation}
If we restrict the values of its entries to
$M_\kappa$, it is clear because of eqs. (\ref{comm_relations}) that the kernel of the corresponding matrix will be spanned by the
rows corresponding to brackets with first class constraints.  In fact we obtain:

\begin{theorem}\label{separacionlig-c3}
Let $\{\phi_i \}$, $i=1,\ldots,p$, be an independent
set of functions defining a a submanifold $N$ of a symplectic manifold $\widetilde{M}$ and let
these constraints be such that they are linear functions in local canonical coordinates of the symplectic manifold $\widetilde{M}$. Denote by
$\pois_{ij}=\{\phi_i ,\phi_j \}$, $i,j=1,\ldots,p$, the $p\times p$ constant matrix built as the Poisson
brackets of the constraints.   Let  $v^{(A)}$ be a basis of the kernel of
the matrix $\pois$, $A = 1,\ldots, r$, where  $2s = p-r$ is its rank.  Let $w^{(\alpha)}$,
$\alpha=1,\ldots,2s$ be a family of independent vectors completing the basis $v^{(A)}$. 
Then, the functions
\begin{equation}\label{1st-c3}
\nu_A = \sum_{j = 1}^p \phi_j \, v_j^{(A)}, \quad A=1,\ldots,r,
\end{equation}
and
\begin{equation}\label{2nd-c3}
\chi_\alpha = \sum_{j = 1}^p \phi_j \, w_j^{(\alpha)}, \quad  \alpha=1,\ldots,2s,
\end{equation}
are basis of the first and second class constraints respectively.
\end{theorem}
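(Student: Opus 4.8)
The plan is to reduce the first/second--class dichotomy to a purely linear--algebraic property of the constant matrix $\pois$ and then read the statement off a basis of $\R^p$ adapted to $\ker\pois$. First I would record the standard reduction: since $N$ is cut out by the independent functions $\phi_i$, every function $\psi$ vanishing on $N$ can be written (locally, by a Hadamard--type argument) as $\psi=\sum_i f_i\,\phi_i$ with the $f_i$ smooth. Hence, for any function $\phi$ vanishing on $N$,
$$
\{\phi,\psi\}|_N=\sum_i\big(f_i\,\{\phi,\phi_i\}+\phi_i\,\{\phi,f_i\}\big)\big|_N=\sum_i f_i|_N\,\{\phi,\phi_i\}|_N ,
$$
using $\phi_i|_N=0$. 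Therefore $\phi$ is first class if and only if $\{\phi,\phi_i\}|_N=0$ for every $i$. Now the linearity hypothesis enters: if $\phi$ is linear in canonical coordinates then each $\{\phi,\phi_i\}$ is a constant, and a constant vanishing on the (nonempty) submanifold $N$ is identically zero; so $\phi$ is first class exactly when $\{\phi,\phi_i\}=0$ for all $i$, and second class exactly when $\{\phi,\phi_{i_0}\}\neq 0$ for some $i_0$.

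With this criterion the rest is bookkeeping with $\pois$. First note that $\{\nu_A,\chi_\alpha\}$ is again a basis of the space of constraints $\mathrm{span}\{\phi_i\}$, since the $p\times p$ matrix whose rows are $v^{(A)}$ and $w^{(\alpha)}$ is invertible by construction and $\nu,\chi$ are obtained from $\phi$ by applying it. For $\nu_A=\sum_j v^{(A)}_j\phi_j$ one computes $\{\nu_A,\phi_i\}=\sum_j v^{(A)}_j\{\phi_j,\phi_i\}=-(\pois\,v^{(A)})_i=0$, using $v^{(A)}\in\ker\pois$ and $\pois^T=-\pois$; hence $\nu_A$ is first class. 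For $\chi_\alpha=\sum_j w^{(\alpha)}_j\phi_j$ one gets $\{\chi_\alpha,\phi_i\}=-(\pois\,w^{(\alpha)})_i$, and since $w^{(\alpha)}$ is linearly independent from the basis $\{v^{(A)}\}$ of $\ker\pois$ it cannot lie in $\ker\pois$, so $\pois\,w^{(\alpha)}\neq 0$ and $\{\chi_\alpha,\phi_{i_0}\}\neq 0$ for some $i_0$; hence $\chi_\alpha$ is second class.

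Finally, to match the commutation relations (\ref{comm_relations}) I would observe that $\{\nu_A,\nu_B\}=v^{(A)T}\pois\,v^{(B)}=0$ and $\{\nu_A,\chi_\alpha\}=v^{(A)T}\pois\,w^{(\alpha)}=-(\pois\,v^{(A)})^T w^{(\alpha)}=0$, so the first--class block is abelian (the structure constants vanish in this linear case), while $C_{\alpha\beta}:=\{\chi_\alpha,\chi_\beta\}=w^{(\alpha)T}\pois\,w^{(\beta)}$ is non-degenerate: if $w:=\sum_\beta c_\beta w^{(\beta)}$ satisfied $w^{(\alpha)T}\pois\,w=0$ for all $\alpha$, then also $v^{(A)T}\pois\,w=0$ for all $A$, so $\pois\,w$ is orthogonal to the whole basis $\{v^{(A)},w^{(\alpha)}\}$, forcing $\pois\,w=0$, i.e. $w\in\ker\pois$, whence $w=0$ and all $c_\beta=0$.

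The only non-formal ingredient is the first step — the representation $\psi=\sum_i f_i\phi_i$ for functions vanishing on a submanifold defined by independent equations, together with the remark that in the linear/affine setting a constant vanishing on $N$ must be zero. I expect no serious obstacle beyond being careful that ``first class'' is to be tested against \emph{all} functions vanishing on $N$, not merely against the chosen generators $\phi_i$; the reduction in the first paragraph is precisely what takes care of that point, after which everything is elementary linear algebra with the antisymmetric matrix $\pois$ and a kernel--adapted basis.
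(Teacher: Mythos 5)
Your proof is correct and follows the same underlying strategy as the paper's: everything reduces to linear algebra with the constant skew--symmetric matrix $\pois$ and a basis of $\R^p$ adapted to $\ker\pois$, with $\{\nu_A,\cdot\}$ vanishing because $v^{(A)}\in\ker\pois$ and $C_{\alpha\beta}=w^{(\alpha)T}\pois\,w^{(\beta)}$ carrying the second--class block. You do, however, supply two details that the paper's proof leaves implicit: the Hadamard--type reduction showing that first--class need only be tested against the generators $\phi_i$ (the paper checks brackets only among the $\nu$'s and $\chi$'s without remarking that this suffices), and the explicit argument that $C_{\alpha\beta}$ is non--degenerate (if $w^{(\alpha)T}\pois\,w=0$ for all $\alpha$ then $\pois\,w$ is orthogonal to the full basis, so $w\in\ker\pois\cap\mathrm{span}\{w^{(\alpha)}\}=\{0\}$), where the paper merely asserts non--degeneracy ``by construction''. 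Both additions are sound and make the argument self--contained; there is no gap in your proposal.
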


\begin{proof}   First, we notice that if $\phi_j$ are linear functions in local canonical coordinates $(\tilde{q}^a, \tilde{p}_a)$ of the symplectic manifold $\widetilde{M}$, then the Poisson brackets 
$$
\pois_{jk} = \{ \phi_j,\phi_k \} = \sum_a \frac{\partial \phi_j}{\partial \tilde{q}^a} \frac{\partial \phi_k}{\partial \tilde{p}_a} -   \frac{\partial \phi_j}{\partial \tilde{p}_a} \frac{\partial \phi_k}{\partial \tilde{q}^a} ,
$$
are constants.
The $p\times p$ matrix $\pois$ is skew--symmetric, thus its rank must be even, say $2s$, hence its kernel will have dimension $r = p - 2s$.

Now, the constraints $\nu_A$ and $\chi_\alpha$ defined by eqs. \eqref{1st-c3} and \eqref{2nd-c3} are a  linear combination of the starting constraints.
When computing the Poisson brackets among themselves we get: 
\begin{eqnarray*}
\{\nu_A ,\nu_B \} & = &
\{\sum_j \phi_j \, v_j^{(A)}, \sum_k \phi_k \,v_k^{(B)}\} =
\sum_{j} v_j^{(a)} \left(\sum_k \{\phi_j , \phi_k \} v_k^{(B)} \right) \\ & = &
\sum_{j,k} v_j^{(a)} \left( \sum_k \pois_{jk} v_k^{(B)} \right)= \sum_j v_j^{(a)} \left( \pois \cdot v^{(B)}\right)_j = 0,\\
\{\chi_\alpha , \nu_A \} & = & \sum_{k,j} \{\phi_k \, w_k^{(\alpha)}, \phi_j \,v_j^{(A)}\} =
\sum_k w_k^{(\alpha)} \left(\pois_{kj} v_j^{(A)}\right) =0.
\end{eqnarray*}

From these expressions we
conclude that the constraints $\nu_A$ are of first class, because all the  brackets of them
with the remaining  constraints vanish on $N$.  Finally, the brackets of the constraints defined in
(\ref{2nd-c3}) with themselves define a nondegenerate matrix (of rank $2s$): 
\begin{eqnarray*}
C_{\alpha \beta} = \{\chi_\alpha ,\chi_\beta \} =\sum_{j,k} \{\phi_j \, w_j^{(\alpha)}, \phi_k \,w_k^{(\beta)}\} =
\sum_{j,k}w_j^{(\alpha)}\{\phi_j, \phi_k \} w_k^{(\beta)} = \pois (w^{(\alpha)}, w^{(\beta)}) ,
\end{eqnarray*}
because by construction the matrix $\pois$ is non--degenerate when
restricted to the  subspace $W = \mathrm{span} \{ w^{(\alpha)} \mid \alpha = 1,\ldots, r\} $, then the matrix $C_{\alpha\beta}$ is invertible.
\end{proof}

We apply the previous results to the regularized Hamiltonian system on $\widetilde{M} = T^*(P\times C)$.  The total coordinates of the extended symplectic linear space $\R^{2n}\times\R^{2m}$
problem are $(x,p\,;u,v)$ where we have added the coisotropic coordinates, $v_a$, as column vectors. 

In matrix form the set of constraints $\{  \phi_j \mid j = 1, \ldots p\}$ is written as: 
$$\Phi [x,p,u,v]' = 0,$$ 
where the rows of the $p\times (2n+2m)$ matrix $\Phi$ are the coefficients of the constraints, this is the $j$th row of $\Phi$ will represent the $j$th linear constraint $\phi_j$ with general form:
$$
\Phi (j,:) = \sigma(j,:)*x(:) + \beta(j,:)*p(:) + \rho(j,:)*u(:) + \omega(j,:)*v(:)
$$
and, from now on, we just call $\Phi$ the set
of constraints. 

Because of the analysis of the regularized Hamiltonian problem in Section \ref{regularization} and Thm. \ref{equivalence},  to obtain the reduced PMP starting from the symplectic manifold $\widetilde{M}$ we must start with the zero order constraints $\phi^{(0)}_a = v_a$,
forcing these coordinates to vanish.  In matrix form they are the first $m$ rows of $\Phi$ and the second set of $m$ rows are given by the set of primary constraints $\phi^{(1)}_a$, $a = 1,\ldots, m$.  Thus the matrix $\Phi$ begins as follows:
$$\Phi = \matrix{cccc}{0_{m\times n}&0_{m\times n}&0_{m\times m}&-I_m\\
&&&\\\sigma^{(1)}&\beta^{(1)}&\rho^{(1)}&0_{m\times m}}.
$$ 
with $[\sigma^{(1)} \mid \beta^{(1)}] = S^{(1)}$ (eq. \eqref{S1}).

\subsection{Computing the first and second class constraints}
Notice that if we have a matrix and we add rows or columns the rank can only increase or remain unchanged and any family of previously independent rows will keep being independent.   Thus, at any step of the algorithm, when we add new constraints to $\Phi$ and we form the new matrix $\poi$, the family of second class constraints can only increase or remain unchanged.  Thus we can separate at each step the second class constraints by just computing the Poisson brackets of the new constraints with the previous first class constraints and analyzing the kernel of the corresponding matrix. 

Thus at each step $k$ of the iteration we separate the new second class
constraints.  

\noindent Denoting the constraints obtained in the step  $k$ as:\\
\begin{tabular}{ll}
$\Phi_I(k):$ & set of first class constraints obtained until step $k$\\
$\Phi_{II}(k):$ & set of second class constraints  obtained until step $k$\\
$\pois(k):$ & brackets' matrix  of $\{\Phi_I(k),\Phi_I(k)\} = 0$ \\
$F$:&  new set of constraints.
\end{tabular}

We update the matrix $F$ adding the new set of constraints obtained to the first
class ones: 
$$
\widetilde{F} = \matrix{c}{\Phi_I(k)\\ F}.
$$ 
We compute the Poisson brackets matrix $\pois(k+1)$ computing the Poisson brackets among the rows of $\widetilde{F}$, this is:
$$
\pois(k+1)=\matrix{c|c}{ 0 &\{\Phi_I(k), F\}\\\hline\{F, \Phi_I(k)\}&\{F,F\}}.
$$

We compute a basis $v^{(A)}$ of the kernel of this matrix and we form the matrix: $v=\matrix{c|c|c}{v^{(1)}&\cdots&v^{(r)}}$ and another matrix whose
columns complete the basis  $v^{(A)}$.  We denote this matrix as
$w=\matrix{c|c|c}{w^{(1)}&\cdots&w^{(2s)}}.$ 

The set of first class constraints, eq. (\ref{1st-c3}), is given 
\eqy{\Phi_I(k+1)=v^T*\widetilde{F},}
and the second class constraints obtained at this step, eq. (\ref{2nd-c3}), is given by:
\eq{\widetilde{\Phi}_{II} = w^T*\widetilde{F} .}
Now, we add them to those obtained in previous steps to form the set of second class constraints until
step $k$, that is,
\eqy{\Phi_{II}(k+1)=\matrix{c}{\Phi_{II}(k)\\\\\widetilde{\Phi}_{II}}.}

Finally we will discuss briefly the reduction of columns suffered by all the constraints at each step of the algorithm due to the partial feedback.  Consider a constraint $\phi$ given as $\phi (x, p, u, v ) = \sigma x+ \beta p+ \rho u+ \omega v = 0$.
The reduction of columns due to the feedback is as follows. 
After obtaining the feedback of $u^{(0)}$ in
the first step, we have, eq. \eqref{u0}, 
$u^{(0)} =V^{(1)}\matrix{c}{\tilde{u}^{(0)}(1:r_1)\\\tilde{u}^{(1)}}$, 
since $\tilde{u}^{(0)}(1:r_1) = \feed^{(1)}\matrix{c}{x\\p}$, we perform the following simultaneous change:
\begin{eqnarray*}
\rho u + \omega v &=& [ \rho V^{(1)}](:,1:r_1)\feed^{(1)}\matrix{c}{x\\p} + \\
 &+& [\rho V^{(1)}](:,r_1+1:m_1)u^{(1)} +[\omega V^{(1)}](:,r_1+1:m_1)) v^{(1)},
 \end{eqnarray*}
where we
have used the fact that   the zeroth constraint $\{v_a\}=0$ must vanish, so we can get
rid of the part $[\omega V^{(1)}]\tilde{v}^{(0)}$. This procedure will be repeated at each step of the
algorithm and at the end we will remove the remaining coisotropic variables. 



\section{Implementation of the algorithm}
In this section we describe the pseudocode for the numerical regularized Hamiltonian constraints algorithm. 


\noindent\begin{minipage}[c]{\textwidth}\vspace{0.3cm}

\hrule~\\ \textbf{Scheme 1:  \emph{HCAPF ``Hamiltonian constraints algorithm with partial feedback on the controls''}} (Regularized Hamiltonian recursive constraints algorithm splitting first and second class constraints for singular optimal LQ control problems) \vspace{0.1cm}\hrule

\bigskip

\begin{tabbing}
Initialize the variables: $S, R$; $\feed$ and $\nofeed$.\\
Build the constraints matrix $\Phi$ adding the coisotropic variables\\
Compute the Poisson brackets matrix $\pois$ of $\Phi$ \\
Split $\Phi$ in first and second class constraints \\\\

{\bf Regular problems} \\
Compute the total feedback and final vector field (there are no constraints) \\\\
{\bf Singular} \= {\bf problems}\\
  \>\textbf{while} \= the number of independent constraints increases\\
  \>\>\textbf{if} \= rank$(R)>0$\\
        \>\>\> Compute the partial feedback: $\feed$, $\feedtot$ and $\nofeed$\\
        \>\>\> Compute  the  vector field: $G$ and $Z$ using the partial feedback\\
        \>\>\> Include the feedback into the first and second class constraints\\
        \>\>\> Iterate the matrices for next step\\
     \>\>\textbf{endif} \\
   \>\> Add the new set of constraints to $\Phi_I$\\
   \>\> Compute the new Poisson bracket matrix $\pois$\\
   \>\> Split the set of constraints in first and second class\\
   \>\> Add the new second class constraints to the previous ones\\
   \>\textbf{endwhile}\\
  
Remove the coisotropic variables\\\\
Obtain the final vector field, total feedback, first and second class constraints\\
\end{tabbing}\hrule
\end{minipage}

\medskip



\noindent\begin{minipage}[c]{\textwidth}\vspace{0.1cm} \hrule~\\ \textbf{Pseudocode 1:
\emph{HCAF ``Hamiltonian constraints algorithm with partial feedback on the controls''}} \vspace{0.01cm}\hrule

\begin{tabbing}\\ 
\textbf{input} $A,~B,~Q,~N,~R,~tol$\\
\com{Inicialization of variables:}\\
$S \leftarrow\matrix{c}{-N^T,B^T};~R \leftarrow-R$;  $r\leftarrow$rank$(R,tol);$
$[l,m]\leftarrow$size($R$); \\
$\feedtot=\matrix{c}{\,};$ $\feed = \matrix{c}{\,}; \,$\com{Feedback}\\
$G\leftarrow\matrix{cc}{A&0_{n\times n}\\Q&-A^T}$; $Z\leftarrow\matrix{c}{B\\N}$;
$[n,n]\leftarrow$size($A$);\com{Vector Field}\\
$\rfeed\leftarrow0$;  $M\leftarrow m$; $k\leftarrow0;$ \\
\com{Addition of the coisotropic constraints.}\\
$[S\mid R]\leftarrow $ \verb+independent_rows+ $([S\mid R],tol)$;~
$ \mathrm{Phi} \leftarrow\matrix{ccc}{0_{m\times 2n}&0_{m\times m}&I_m\\
 &&\\ S & R &0_{m\times m}}$ ;\\

$\pois\leftarrow$ \verb+poisson_brackets+ $(\mathrm{Phi},n,m,tol)$; \com{bracket of the constraints.}\\
$[v,w]\leftarrow$ \verb+numerical_ker+ $(\pois,tol)$;\\
$\mathrm{Phi}_I\leftarrow v^T \mathrm{Phi};~~ \mathrm{Phi}_{II}\leftarrow w^T \mathrm{Phi}$; \com{1st and 2nd class constraints.}\\\\

\com{\bf Regular problems}\\
\textbf{if} rank\=$(R,tol)==m$ \\
$[U^T,R,V^T]\leftarrow$svd$(R)$; $\rfeed\leftarrow m$;\\
     \> $\feed \leftarrow R^{-1} * U * S;$  $\feedtot=V;$\\
     \>  $G\leftarrow G+ Z * V^T$;  $\nofeed,~\mathrm{Phi}_I,~\mathrm{Phi}_{II},~Z\leftarrow$empty ;\\\\

 \com{\bf Singular problems}\\
 \textbf{else}  $\nofeed\leftarrow I_m$;  $p\leftarrow0;$\\
     \>\textbf{while} \=$\rfeed<m~\&~  p\leftarrow2r+$rank$(\mathrm{Phi},tol );$\\
     \>\> $k\leftarrow k+1;$
    \\

     \>\>\textbf{if} \= $r>0$ \com{There is partial feedback.}\\
        \>\>\> $\rfeed\leftarrow \rfeed+r;$ $[U^T,R,V^T]\leftarrow$svd$(R)$;\\
        \>\>\>\com{Partial feedback}\\
        \>\>\>$R\leftarrow R(1:r,1:r)$;\\
        \>\>\>$\feed \leftarrow R^{-1} * U(1:r,:) * S;$
        $f_{xp}\leftarrow\matrix{c}{f_{xp}\\\feed};$ \\\\
        \>\>\>$\feedtot\leftarrow\matrix{c}{\feedtot\\V(1:r,:) * \nofeed};$\\\\
        \>\>\>$\nofeed\leftarrow \matrix{c}{V * \nofeed}(r+1:m,:);$ \\

     \>\>\com{Vector field, including feedback}\\
     \>\>$G\leftarrow G+ Z(:,1:r) * \feed$; $Z\leftarrow Z(:,r+1:m);$ \\\\
      \>\com{Reduction of 1st class constraints due to feedback.}\\
     \>\textbf{if} \=rank$(\mathrm{Phi}_I,tol)>tol;$\\
        \>\>$F_3\leftarrow \mathrm{Phi}_I(:,2n+1:2n+m)*V^T;$\\
        \>\>$\mathrm{Phi}_I(:,1:2n)\leftarrow  \mathrm{Phi}_I(:,1:2n)+ F_3(:,1:r)*\feed;$\\
        \>\>$F_4\leftarrow[\mathrm{Phi}_I(:,2n+m+1:2n+2m)*V^T](:,r+1:m)$;\\
        \>\>$\mathrm{Phi}_1\leftarrow$\verb+independent_rows+$\big([\mathrm{Phi}_I(:,1:2n),\, F_3(:,r+1:m),\, F_4],tol\big)$;\\
    \>\textbf{endif}\\
$\,\cdots/\cdots$\\

\end{tabbing}\hrule\end{minipage}


\noindent\begin{minipage}[c]{\textwidth}\vspace{0.3cm} \hrule~\\ \textbf{Pseudocode 1:
\emph{HCAPF ``Hamiltonian constraints algorithm with partial feedback on the controls''}}\vspace{0.3cm}\hrule
\begin{tabbing}
\\ $\,\cdots/\cdots$\\
\hspace{0.5cm}\=\\
      \>\com{Reduction of 2nd class constraints due to feedback.}\\
    \> \textbf{if} \=rank$(\mathrm{Phi}_{II},tol)>tol$\\
       \>\> $F_3\leftarrow \mathrm{Phi}_{II}(:,2n+1:2n+m) * V^T;$\\
       \>\>$\mathrm{Phi}_{II}(:,1:2n)\leftarrow \mathrm{Phi}_{II}(:,1:2n)+ F_3(:,1:r) * \feed;$\\
       \>\>$ F_4\leftarrow [\mathrm{Phi}_{II}(:,2n+m+1:2n+2m) * V^T](:,r+1:m);$\\
       \>\> $\mathrm{Phi}_{II}\leftarrow$\verb+independent_rows+$\big([\mathrm{Phi}_{II} (:,1:2n),\, F_3(:,r+1:m),\, F_4],tol\big)$\\
       \>\textbf{endif}\\
       
    \>\com{Iteration of the matrices}\\
    \>\>$R \leftarrow U(r+1:l,:)*  S * Z;$\\
    \>\>$S \leftarrow U(r+1:l,:) * S * G;$\\
\>\textbf{elseif} \com{Iteration of the matrices for $r=0$. There is no feedback.}\\
      \>\>$R\leftarrow S * Z;$\\
      \>\>$S \leftarrow S * G;$\\
\>\=\textbf{endif}\\\\
     \>$r\leftarrow$rank$(R,tol)$; $[l,m]\leftarrow$size$(R)$;
       $p\leftarrow$rank$(\mathrm{Phi},tol );$ \\
    \>\com{New set of constraints}\\
        \> $\mathrm{Phi}\leftarrow$ \verb+independent_rows+ $\bigg(\matrix{ccc}{&\mathrm{Phi}_I&\\ S& R&0_{l\times m}},tol\bigg)$;\\

    \>$\pois\leftarrow$ \verb+poisson_brackets+ $(\mathrm{Phi},n,m,tol)$;  $[v,w]\leftarrow$ \verb+numerical_ker+ $(\pois,tol)$;\\
    \>$\mathrm{Phi}_I\leftarrow v^T * \mathrm{Phi};$  \com{1st class constraints.}\\
    \>$ \mathrm{Phi}_{II}\leftarrow$ \verb+independent_rows+ $\bigg(\matrix{c}{\mathrm{Phi}_{II}\\w^T* \mathrm{Phi}},tol\bigg)$;
    \com{whole set 2nd class constraints.}\\
   \hspace{0.5cm}\= \textbf{endwhile}\\\\
   
    \hspace{0.5cm}\=\com{Computation of the final feedback for the last iteration}\\
    \>\textbf{if}  $r>$\=$tol$\\
        \>\>\com{Partial feedback}\\
        \>\>$\feed\leftarrow R^{-1} * U(1:r,:)*S;$
        $f_{xp}\leftarrow\matrix{c}{f_{xp}\\\feed};$ \\\\
        \>\>$\feedtot\leftarrow\matrix{c}{\feedtot\\V(1:r,:) * \nofeed};$\\
        \>\>$\nofeed\leftarrow \matrix{c}{V * \nofeed}(r+1:m,:);$ \\\\
        
         \hspace{1.3cm}\= \com{Reduction of 1st class constraints due to feedback.}\\
     \>\textbf{if} rank\=$(\mathrm{Phi}_1,tol)>tol;$\\
        \>\>$F_3\leftarrow \mathrm{Phi}_I(:,2n+1:2n+m)*V^T;$\\
        \>\>$\mathrm{Phi}_I(:,1:2n)\leftarrow  \mathrm{Phi}_I(:,1:2n)+ F_3(:,1:r)*\feed_{xp};$\\
        \>\>$F_4\leftarrow[\mathrm{Phi}_I(:,2n+m+1:2n+2m)*V^T](:,r+1:m)$;\\
       \>\>$\mathrm{Phi}_I\leftarrow$\verb+independent_rows+$\big([\mathrm{Phi}_I(:,1:2n),\, F_3(:,r+1:m),\, F_4],tol\big)$;\\
     \>\textbf{endif}\\
 
 $\,\cdots/\cdots$\\

  \end{tabbing}\hrule\end{minipage}


\noindent\begin{minipage}[c]{\textwidth}\vspace{0.3cm} \hrule~\\ \textbf{Pseudocode 1:
\emph{HCAPF ``Hamiltonian constraints algorithm with partial feedback on the controls''}}\vspace{0.3cm}\hrule
\begin{tabbing}
\\ $\,\cdots/\cdots$\\
          \hspace{0.5cm}\=\\
        \>\com{Reduction of of 2nd class constraints due to feedback.}\\
     \>\textbf{if} rank\=$(\mathrm{Phi}_{II},tol)>tol$\\
       \>\> $F_3\leftarrow \mathrm{Phi}_{II}(:,2n+1:2n+m)*V^T;$\\
       \>\>$\mathrm{Phi}_{II}(:,1:2n)\leftarrow \mathrm{Phi}_{II}(:,1:2n)+ F_3(:,1:r)*\feed;$\\
       \>\>$ F_4\leftarrow [\mathrm{Phi}_{II}(:,2n+m+1:2n+2m)*V^T](:,r+1:m);$\\
       \>\> $\mathrm{Phi}_{II}\leftarrow$\verb+independent_rows+$\big(\mathrm{Phi}_{II}(:,1:2n),\, F_3(:,r+1:m),\, F_4],tol\big)$\\
    \>\textbf{endif}\\
    \hspace{0.5cm}\textbf{endif}\\
    \hspace{0.5cm}$\feed\leftarrow f_{xp}$;\\
\textbf{endif}\\
$m\leftarrow M- \rfeed$\\
\com{Remove the coisotropic variables.}\\
\textbf{if} \=\rank$(\mathrm{Phi}_I,tol)>tol$;\\
    \>$\mathrm{Phi}_I\leftarrow $\verb+independent_rows+$\big(\mathrm{Phi}_I(:,1:2n+m),tol\big)$; \\
\textbf{endif}\\
\textbf{if} \=\rank$(\mathrm{Phi}_{II},tol)>tol$;\\
    \>$\mathrm{Phi}_{II} \leftarrow $\verb+independent_rows+$\big(\mathrm{Phi}_{II} (:,1:2n+m),tol\big)$; \\
\textbf{endif}\\
\com{Final vector field:  $\dot{x} = A_x x+ A_p p+ B_u u; \dot{p} = Q_x x+ Q_p p+ N_u u$}\\
$A_x\leftarrow G(1:n,1:n)$; $A_p\leftarrow G(1:n,n+1:2n)$;\\
$Q_x\leftarrow G(n+1:2n,1:n)$; $Q_p\leftarrow G(n+1:2n,n+1:2n)$;\\
\textbf{if} $\rfeed$\= $<M$\\
\>$B_u\leftarrow Z(1:M,:)$; $N_u\leftarrow Z(M+1:2M,:)$;\\
\textbf{elseif} $B_u,N_u\leftarrow$ empty;\\
\textbf{endif}\\\\
\textbf{Output} $k,~m,~\mathrm{Phi}_I,~\mathrm{Phi}_{II},~\feed,~\feedtot,~\nofeed,~A_x,~A_p,~Q_x,~Q_p,~B_u,~N_u$;
\end{tabbing}\hrule\end{minipage}



\noindent\begin{minipage}[c]{\textwidth}\vspace{0.2cm}
\hrule~\\\textbf{Pseudocode 2:}
\verb+independent_rows+ used in HCAPF \vspace{0.2cm}\hrule
\begin{tabbing}
\textbf{procedure} \verb+independent_rows+($\mathrm{\Phi},~tol$)\\\\
\textbf{if}  rank\=$(\mathrm{\Phi},tol)>tol$;\\
   \> $[U,S,V]\leftarrow$ svd$(\mathrm{\Phi})$;\\
   \> $r\leftarrow$rank\=$(\mathrm{\Phi},tol)$;\\
   \> $\mathrm{\Phi}\leftarrow\matrix{c}{U^T \mathrm{\Phi}}(1:r,:)$;\\
\textbf{elseif}  $\mathrm{\Phi}\leftarrow$ empty;\\
\textbf{endif}\\\\
\textbf{output} $$
\end{tabbing}\hrule\end{minipage}


\noindent\begin{minipage}[c]{\textwidth}\vspace{0.2cm} \hrule~\\\textbf{Pseudocode 3:}
\verb+numerical_ker+ used in HCAPF \vspace{0.2cm}\hrule
\begin{tabbing}\\
\textbf{procedure} \verb+numerical_ker+(A,tol)\\\\
$[m,n]\leftarrow$ size($A$); $r\leftarrow$ rank$(A,tol)$; \\
$[U,S,V]\leftarrow$svd$(A)$; ~\=\com{Singular Value Decomposition of $A$}\\
$v\leftarrow V(:,r+1:n)$; ~\> \com{Matrix whose columns span the  kernel of $A$ with tolerance  $tol$}\\
$w\leftarrow V(:,1:r)$; ~\> \com{Matrix whose columns are independent of the columns of $v$,}\\
\textbf{output} $v,~w$
\vspace{0.2cm}
\end{tabbing}\hrule\end{minipage}


\noindent\begin{minipage}[c]{\textwidth}\vspace{0.2cm}
\hrule~\\\textbf{Pseudocode 4:} \verb+poisson_brackets+ used in HCAPF \vspace{0.2cm}\hrule
\begin{tabbing}\\
\textbf{procedure} \verb+poisson_brackets+($\mathrm{Phi},~n,~m,~tol$)\\
$l\leftarrow$rows$(\mathrm{Phi})$; $\pois\leftarrow 0_{l\times l}$; \\
\textbf{if} rank\=$(\mathrm{Phi},tol)>tol$\\
\>\textbf{for} \=$i=1:~n$\\
\>\>$\pois\leftarrow \pois +\mathrm{Phi}(:,\,i) * \mathrm{Phi}(:,i+n)^T- \mathrm{Phi}(:,\,i+n) * \mathrm{Phi}(:,i)^T$;\\
\>\>\com{Contribution of the state and costate  variables, $(x,p)$.} \\
\>  \textbf{endfor}\\ 
\>\textbf{for} \=$j=1:~m$\\
\>\> $\pois\leftarrow \pois+\mathrm{Phi}(:,j+2n) * \mathrm{Phi}(:,j+2n+m)^T- \mathrm{Phi}(:,j+2n+m) * \mathrm{Phi}(:,\,j+2n)^T$\\
\>\>\com{Contribution of the control and coisotropic  variables, $(u,v)$.}\\
\>\textbf{endfor}\\
\>$\pois\leftarrow\dfrac{\pois-\pois^T}{2}$\\
\textbf{elseif}\\
\>  $\pois\leftarrow[~]$\\
\textbf{endif}\\
\textbf{output}~ $\pois$
\end{tabbing}\hrule
\end{minipage}

\bigskip

Above we show the pseudocodes for this algorithm as well as the subroutines used in it: \verb+independent_rows+, \verb+numerical_ker+, and \verb+poisson_brackets+. The function \verb+independent_rows+ remove the dependent 
rows of a matrix $\Phi$.  The function \verb+numerical_ker+ computes a matrix $v$ 
whose columns span the kernel of $A$ and a matrix $w$ whose columns are independent of the columns 
of $v$. Finally,  \verb+poisson_brackets+ generates the matrix $\pois=(\pois_{ij})$, 
where each element $\pois_{ij}$ is the
Poisson bracket  of the $i$-th row  with the  $j$-th row of the matrix $\Phi$. The
dimensions $n$ and $m$ of the state  and control variables, $x$ and $u$, are needed to compute the
Poisson bracket.

\section{Examples and numerical experiments}\label{experiments}
We  discuss here some numerical experiments that exhibit the stability and consistency of the numerical algorithm discussed above. The microprocessor used for the
numerical computations was  Intel(R), Core(TM) i7-2640M, CPU 2.80 GHz, 2.80 GHz, 4.00 GB RAM, 64 bits, and the program was MATLAB 7.12.0.

We describe two sets of experiments with small and large recursive index respectively.  We test the algorithm against a class of non--trivial problems general enough for the purposes of exhibiting its numerical stability but that can be solved exactly.

In the small index problems ($\nu =2$), we show  that the algorithm is stable with respect to the
tolerance used to compute the numerical rank, $tol$, and with respect to perturbations of the data,
$\delta$. We will also discuss the dependence with the size of the matrices, $n$.

In the large index case, we analyze a problem of index  $n-1$, where the algorithm behaves properly
with respect to the number of steps, both regarding the tolerance, $tol$, and the perturbation of
the data, $\delta$.


\subsection{Small index problems. Small matrices}

Consider a positive semidefinite symmetric $n\times n$ matrix $R$ of rank $r$, thus there will
exist an orthogonal matrix $U$ such that
\eq{\label{svdR} R = U^T R' U,}%
where all elements of $R'$ vanish except $R'_{11},\ldots,R'_{rr} > 0$. State and control spaces are both $\R^n$, and the total space $(x,p,u)$ is $\R^{3n}$. The matrix $A$ is the identity and $B$ is an orthogonal matrix, $B^T B = I_n$.  Finally, the objective functional is constructed by using a generic
diagonal matrix $Q=D$, a matrix $N$ of the form $N = BV,$ where $V$ is a symmetric matrix constructed as 
$V = \frac12B^T\cdot D_l\cdot B$, where $D_l$ is the truncated diagonal matrix of $Q$ with only the  $l$ first diagonal entries non--zero, and  the
matrix $R$ described above. The primary constraints, taking into account that $B^TN - N^TB = 0$ (without feedback) are:
$$ \phi^{(1)} = -N^T x+ B^Tp  - Ru= 0, $$
an we will obtain the partial feedback of $r$ controls.  The reduced constraint will be:
$$\widetilde{\phi}^{(1)}(x,p,u) =  U^1(-N^TA + B^TQ)x - U^1B^TA^Tp = 0 ,$$
and computing  the derivative we get: 
\begin{eqnarray}
\dot{\widetilde{\phi}}^{(1)} &=&
U^1(-N^TA^2 +  B^TQA - B^TA^TQ)x + U^1B^T(A^T)^2 p + \\ \nonumber
&+&U^1(B^TQB - N^TAB - B^TA^TN )u = 0 .
\end{eqnarray}

The matrix  $B^TQB - N^TAB - B^TA^TN$ will be invertible for generic $A$, $Q$, $B$ and
$V$, but for our problem the matrix $ R^{(2)} = U^1(B^TQB - 2V)$ has rank $m-l$, so we get partial feedback of $m-l$ controls and the algorithm stops here since the rest is proportional to the previous constraints.

The numerical experiment  of this problem will consist in applying the algorithm to a
collection of matrices built up as a random perturbation of size $\delta$ of the matrices $A$, $B$, $Q$, $N$: 
$\tilde{A}=A+\delta A,$ $\|\delta A\|<\delta,$ $\tilde{B}=B+\delta B,$ $\|\delta B\|<\delta,$
$\tilde{Q}=Q+\delta Q,$ $\|\delta Q\|<\delta$ and $\tilde{N}=N+\delta N,$ $\|\delta N\|<\delta$. It
is computed for $n=100,~200$, with $r=n-20$ and $l=5$. 

We analyze the number of steps before the algorithm stabilizes and compute the angle $\alpha$, between the final constraint submanifold of the perturbed problem and
the exact one, this is the error introduced in the problem by the perturbations $\delta A$, $\delta
B$, $\delta Q$ and $\delta N$.   Notice that the original matrix $R$ does not affect higher order constraints, hence
its numerical influence restricts just to launching the algorithm until they are of order of the tolerance, $tol=10^{-6}$. 

Table \ref{t1} shows the exact and computed steps of the algorithm, the leftover controls, $m= n-(r+l)= n-(n-20+5)=15$
and the rank $rp=10$ of the Poisson matrix, that gives us the number of second class constraints.  We compare it with the
perturbed problem  (number of steps, $ m_1$ and $rp_1$). 

Finally, we compute the error ($\alpha$), measured as the angle
between both set of constraints. This is only computable when both set of constraints have the same number of columns, so it fails when the leftover controls are not the same. 
The results  in Table \ref{t1} show that the algorithm works well up to perturbations of order of $\delta=10^{-6}=tol$, even though the nature of 
first/second class constraints fails at $\delta=10^{-7}$ and so does the number of steps for $n=200$. The
least squares approximation of $\log_{10} \alpha$ versus $\log_{10} \delta$ gives two lines of slopes  $0.9765$ (for $n=100$)and $0.9803$ (for $n=200$), which is consistent with $\alpha=O(\delta).$ (See Figure \ref{figure1}).

%
%
\begin{center}
\begin{table}
\caption{First experiment  (small index, small matrices). $tol=10^{-6}$ }\label{t1}
\begin{tabular}{ccccccccc}\hline
$n$&$\delta$&\# exact steps&\# steps&$m$&$m_1$&$rp$&$rp_1$&$\alpha$/ error\\\hline
100&1e-013& 3& 3&15&15&10&10&0.0000000000561788\\
100&1e-012& 3& 3&15&15&10&10&0.0000000005405419\\
100&1e-011& 3& 3&15&15&10&10&0.0000000054350352\\
100&1e-010& 3& 3&15&15&10&10&0.0000000548428769\\
100&1e-009& 3& 3&15&15&10&10&0.0000004943661747\\
100&1e-008& 3& 3&15&15&10&10&0.0000049043295974\\
100&1e-007& 3& 3&15&15&10&12&0.0000373718753680\\
100&1e-006& 3& 3&15& 0&10&10&not computable\\
100&1e-005& 3& 2&15& 0&10& 2&not computable\\
\hline
200&1e-013& 3& 3&15&15&10&10&0.0000000000108757\\
200&1e-012& 3& 3&15&15&10&10&0.0000000001127705\\
200&1e-011& 3& 3&15&15&10&10&0.0000000010570457\\
200&1e-010& 3& 3&15&15&10&10&0.0000000113724130\\
200&1e-009& 3& 3&15&15&10&10&0.0000001093589043\\
200&1e-008& 3& 3&15&15&10&10&0.0000011856677989\\
200&1e-007& 3& 7&15&15&10&22&0.0000068162846885\\
200&1e-006& 3& 3&15& 0&10&12&not computable\\
200&1e-005& 3& 2&15& 0&10& 0&not computable\\
\hline\end{tabular}
\end{table}
\end{center}

In this experiment it was also computed the improvement obtained by performing partial feedback on the controls at each step versus the plain constraints algorithm.  When matrices are large enough the improvement in CPU time is in the range $[58.1\%,118.4\%]$ for $n=100$ and 
$[76.1\%,200.4\%]$ for $n=200$.   

In Table \ref{t2} we have selected a fixed value for the perturbation $\delta$ (= $10^{-12}$). We  can observe that the algorithm is almost insensitive to the size of the matrices, $n$.  Again, the improvement in computation time due to partial feedback is in the range of $[84.9\%,171.6\%]$.

\begin{figure}[h!]
\begin{center}
\includegraphics[width=8cm]{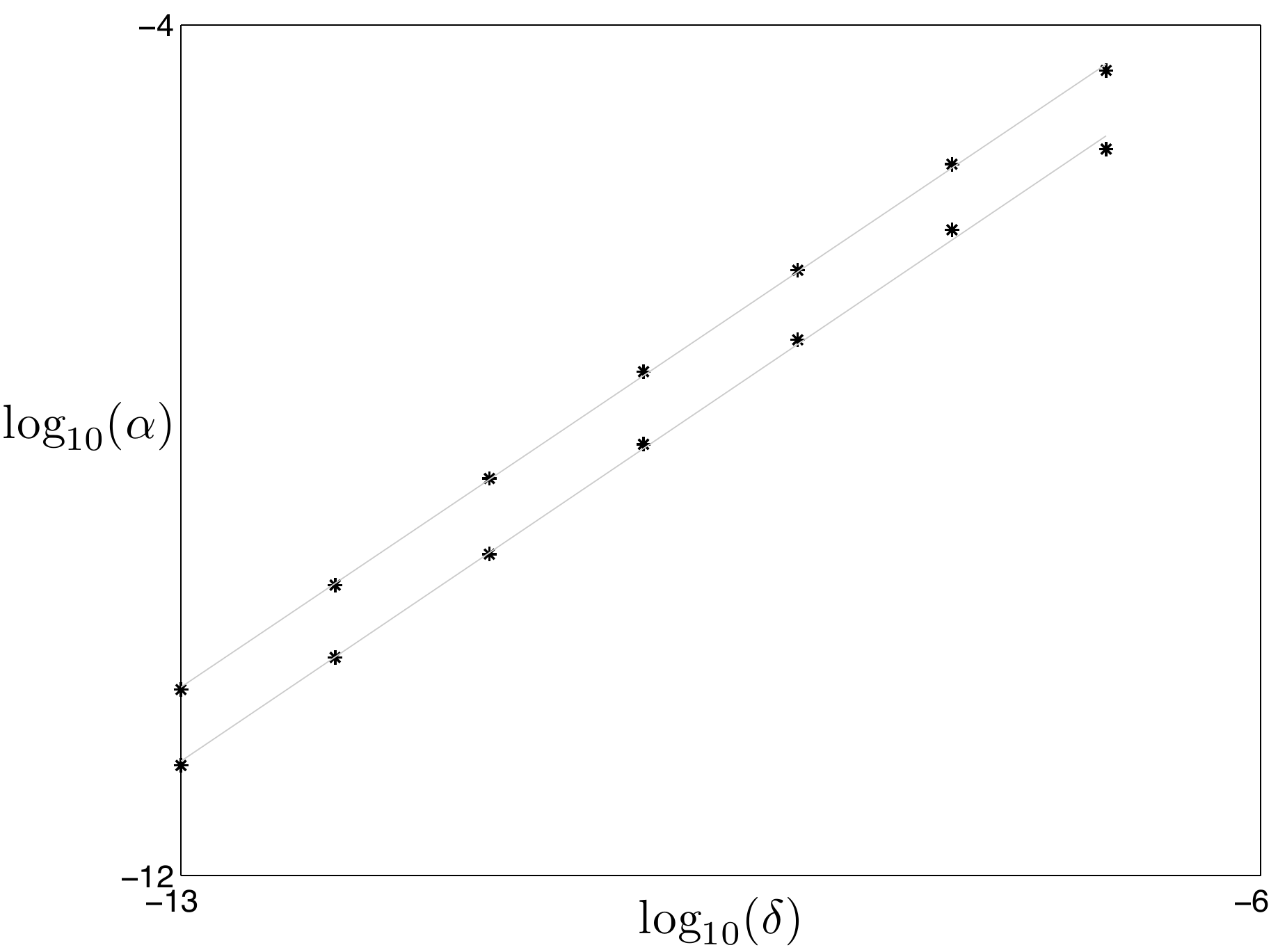}
\caption{Error for the first experiment  (small index, small matrices). $n=100,~200$, $tol=10^{-6}$ }\label{figure1}
\end{center}
\end{figure}


\begin{center}
\begin{table}
\caption{First experiment  (small index, small matrices). $\delta=10^{-12},$
$tol=10^{-6}$}\label{t2}
\begin{tabular}{cccccccc}\hline
$n$&\# exact steps&\# steps&$m$&$m_1$&$rp$&$rp_1$&$\alpha$/ error\\\hline
100& 3& 3&15&15&10&10&0.0000000000464235\\
125& 3& 3&15&15&10&10&0.0000000004122544\\
150& 3& 3&15&15&10&10&0.0000000000829511\\
175& 3& 3&15&15&10&10&0.0000000011055558\\
200& 3& 3&15&15&10&10&0.0000000001268241\\
\hline\end{tabular}
\end{table}
\end{center}


\subsection{Small index problems. Large matrices}
Let us consider now the linear--quadratic problem given by $Q=A=I_n \in\R^{n\times n},$
$B^T=(1,\ldots,1)\in\R^{n\times 1},~N^T=(0,\ldots,0)\in\R^{n\times 1},~R=0;$ so the constraints
matrix will be (as shown in \cite{De09})
\begin{equation}
\P = \matrix{ccc|rcr|c|c}{0 & \cdots & 0 & 0 & \cdots & 0 & 0 & 1 \\0 & \cdots & 0 & 1 & \cdots &1 & 0 & 0 \\ 1 & \cdots & 1 & -1 & \cdots & -1 & 0 & 0 \\
0 & \cdots & 0 & 1 & \cdots & 1 & n & 0 },
\end{equation}
where $n$ is the dimension of the matrices $A$ and $Q$.
Thus, in the third row we obtain  optimal feedback and the final constraint submanifold is given by
the following equations: $x_1+\cdots +x_n=p_1+\cdots+ p_n=u=0$.

We apply the numerical algorithm  for the previous matrices for $n=3000$, tolerance equal to
$10^{-6}$ and we compare the solution obtained with the perturbed  matrices: $\tilde{A}=A+\delta
A,$ $\|\delta A\|<\delta,$ $\tilde{N}=N+\delta N,$ $\|\delta N\|<\delta$, $\tilde{B}=B+\delta B,$
$\|\delta B\|<\delta,$ and  $\tilde{Q}=Q+\delta Q,$ $\|\delta Q\|<\delta$, where
$\delta=10^{-13}\to 10^{-5}$. 

Again, as the final constraint submanifold of the original problem
and the perturbed one must be the same, we measure the angle between them (the error),  we show it
in Table \ref{t3}. 
The slope of the least squares approximation of $\log_{10}(\alpha)$ versus
$\log_{10}(\delta)$ is $0.9988$ (see Figure \ref{figure2}). 
Again, consistently with the previous results, the data shows that
$\alpha=O(\delta)$, and fails when the perturbation is of order of the tolerance.

\begin{center}
\begin{table}[h!]
\caption{Second experiment. $n=3000$.  $tol=10^{-6}$}\label{t3}
\begin{tabular}{cccccccc}\hline
$\delta$&\# exact steps&\# steps&$m$&$m_1$&$rp$&$rp_1$&$\alpha$/ error\\\hline
1e-013& 3& 3& 0& 0& 2& 2&0.0000000000022437\\
1e-012& 3& 3& 0& 0& 2& 2&0.0000000000223028\\
1e-011& 3& 3& 0& 0& 2& 2&0.0000000002226745\\
1e-010& 3& 3& 0& 0& 2& 2&0.0000000022331076\\
1e-009& 3& 3& 0& 0& 2& 2&0.0000000223615340\\
1e-008& 3& 3& 0& 0& 2& 2&0.0000002228797000\\
1e-007& 3& 3& 0& 0& 2& 2&0.0000022084665700\\
1e-006& 3& 3& 0& 0& 2& 2&0.0000218159062779\\
1e-005& 3& 0& 0& 0& 2& 0&not computable\\
\hline\end{tabular}
\end{table}
\end{center}

\begin{figure}[h!]
\begin{center}
\includegraphics[width=8cm]{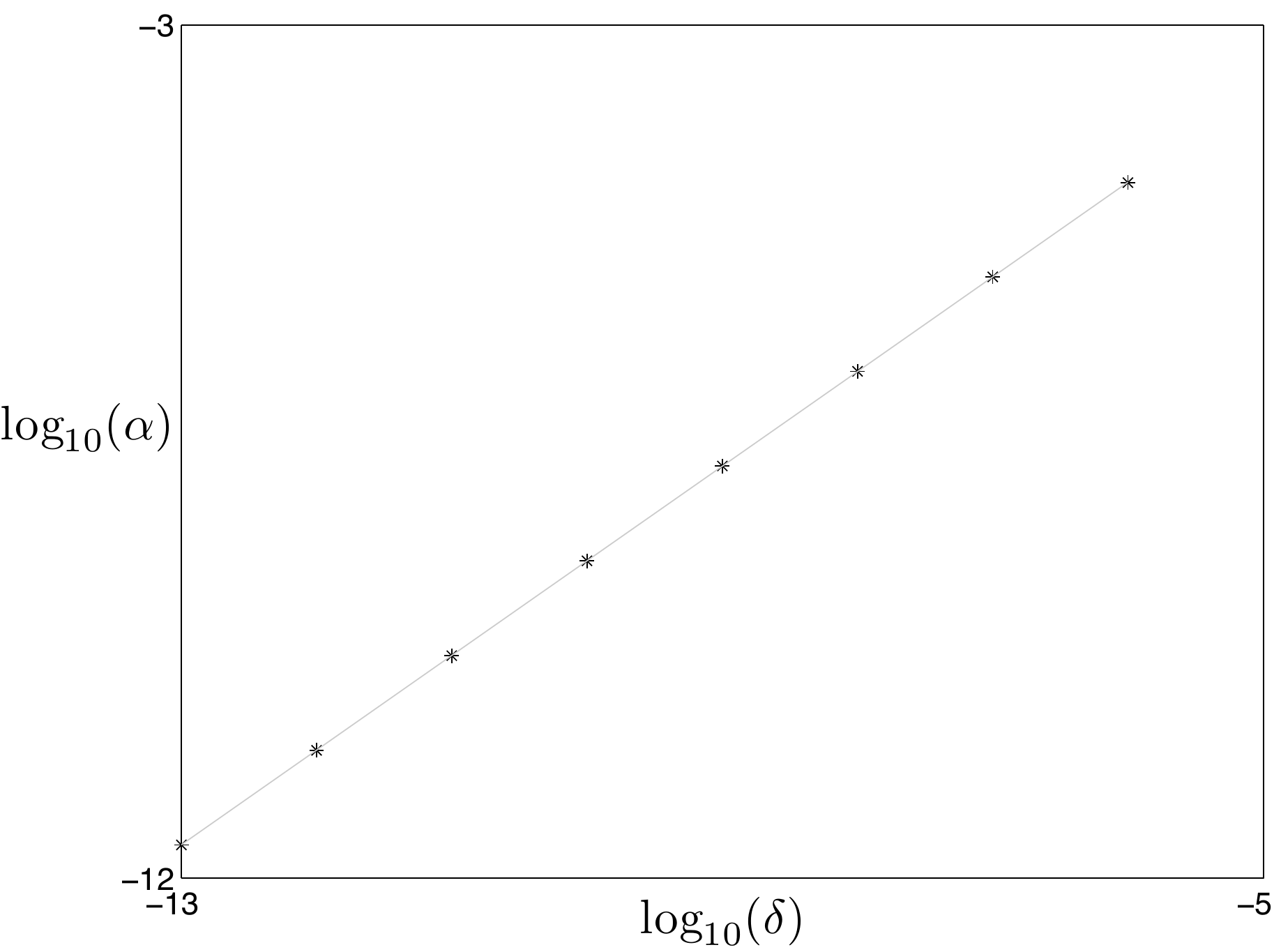}
\caption{Error for the second experiment (small index, large matrices).
$n=3000$.  $tol=10^{-6}$}\label{figure2}
\end{center}
\end{figure}


\subsection{Large index problems}
Consider the following problem:
\eq{ A\in\R^{n\times n}, ~Q=A+A^T, ~B\in\R^{n\times 1}, ~N=B, R=0.}
As shown in \cite{De09}, $\rho^{(1)} = \rho^{(2)}=\cdots \rho^{(k+1)}=0$,
since all these matrices are always zero,  there will not be optimal feedback and the constraints matrix will be:
\eqi{\P &=& \matrix{ccc|c|c}{0 & 0 & & 0& 1\\ -B^T & B^T & & 0& 0\\ B^TA^T & -B^TA^T & & 0 & 0\\-B^T(A^T)^2 & B^T(A^T)^2 & &0& 0\\
\vdots & \vdots & & \vdots& \vdots\\ -(-1)^kB^T(A^T)^k & (-1)^kB^T(A^T)^k & & 0& 0}.}%
The algorithm will stop   when at a
given step we  obtain a linear combination of the previous rows.
We apply the algorithm to a problem where the pair $(A,B)$ is such that  \mbox{$A\in\R^{n\times
n}$} is a nilpotent matrix of index $n$, i.e.,  $A^{n-1}\neq 0$, $A^n=0$, and $B\notin~\ker(A^l)$,
\mbox{$l=1,\ldots,n-1$,} i.e., $B^T=(1,\ldots,1)\in\R^{1\times n}$.  Thus the index of the
algorithm is $n-1$ ($n$ steps). For $n=20$, $\delta=10^{-13}\to 10^{-5}$, and tolerance equal to
$10^{-6}$, we perturb the matrices as: $\tilde{A}=A+\delta A,$ $\|\delta A\|<\delta,$
$\tilde{N}=N+\delta N,$ $\|\delta N\|<\delta$, $\tilde{B}=B+\delta B,$ $\|\delta B\|<\delta$ and
$\tilde{Q}=\tilde{A}+\tilde{A^T},$ $\|\delta Q\|<\delta$. The perturbation of $\tilde{Q}$   is done
to maintain the structure of the problem.  The results are shown in Table \ref{t4} and Figure
\ref{f4}. The slope of the least squares fitting of the resulting data is $1.0115$, that is,
$\alpha=O(\delta)$ (see Figure \ref{figure4}).

\begin{center}
\begin{table}[h!]\caption{Third experiment (large index).  $tol=10^{-6}$}
\label{t4}
\begin{tabular}{ccccccccc}\hline
$n$&$\delta$&\# exact steps&\# steps&$m$&$m_1$&$rp$&$rp_1$&$\alpha$/ error\\\hline
20&1e-013&20&20& 1& 1& 0& 0&0.0000000000005502\\
20&1e-012&20&20& 1& 1& 0& 0&0.0000000000053287\\
20&1e-011&20&20& 1& 1& 0& 0&0.0000000000891883\\
20&1e-010&20&20& 1& 1& 0& 0&0.0000000003152048\\
20&1e-009&20&20& 1& 1& 0& 0&0.0000000039064143\\
20&1e-008&20&20& 1& 1& 0& 0&0.0000000375324515\\
20&1e-007&20&20& 1& 1& 0& 2&0.0000011726145723\\
20&1e-006&20& 3& 1& 0& 0& 2& not computable\\
20&1e-005&20& 0& 1& 0& 0& 0& not computable\\
\hline\end{tabular}
\end{table}
\end{center}


\begin{center}
\begin{figure}[h]\label{f4}
\includegraphics[width=8cm]{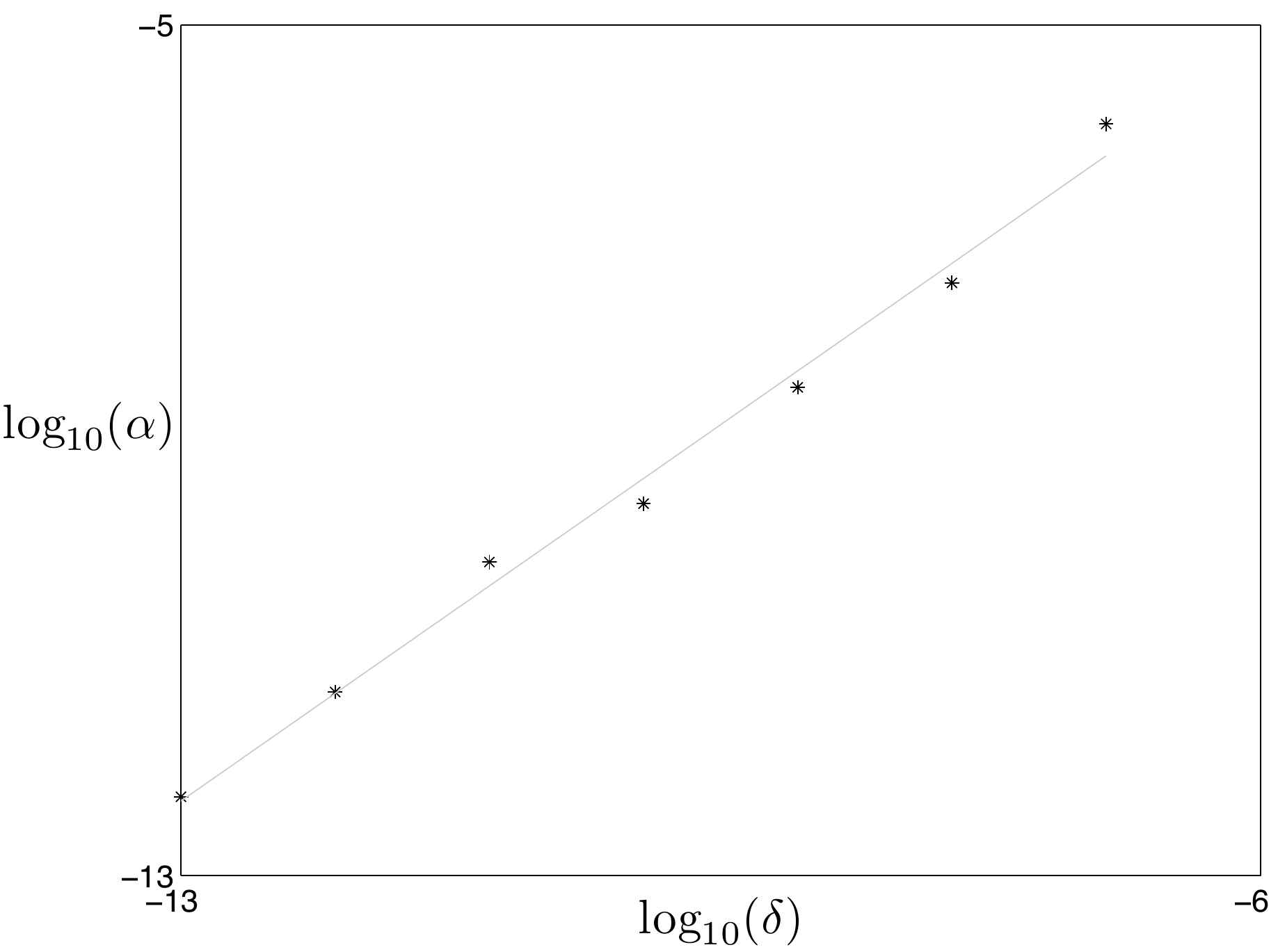}
\caption{Error for the third experiment (large index). $n=20$,  $tol=10^{-6}$}\label{figure4}
\end{figure}
\end{center}


\end{document}